\newenvironment{abstracts}{%
  \ifx\maketitle\relax
    \ClassWarning{\@classname}{Abstract should precede
      \protect\maketitle\space in AMS document classes; reported}%
  \fi
  \global\setbox\abstractbox=\vtop \bgroup
    \normalfont\Small
    \list{}{\labelwidth\z@
      \leftmargin3pc \rightmargin\leftmargin
      \listparindent\normalparindent \itemindent\z@
      \parsep\z@ \@plus\p@
      
      \itemsep\medskipamount
    }%
}{%
  \endlist\egroup
  \ifx\@setabstract\relax \@setabstracta \fi
}
\newcommand{\abstractin}[1]{%
  \otherlanguage{#1}%
  \item[\hskip\labelsep\scshape\abstractname.]%
}
\newcommand{\QQ}{\mathcal{Q}}
\newcommand{\CC}{\mathcal{C}}
\newcommand{\R}{\mathbb{R}}
\newcommand{\B}{\mathcal{B}}
\newcommand{\M}{\mathcal{M}}
\newcommand{\MLS}{\mathrm{MLS}}
\newcommand{\var}{\mathrm{Var}}
\renewcommand{\hat}{\widehat}
\newcommand{\NN}{\mathbb{N}}
\newcommand{\PP}{\mathcal{P}}
\renewcommand{\phi}{\varphi}
\renewcommand{\var}{\mathrm{Var}}
\renewcommand{\var}{\mathrm{Var}}
\def\semicolon{;}
\def\applytolist#1{
    \expandafter\def\csname multi#1\endcsname##1{
        \def\multiack{##1}\ifx\multiack\semicolon
            \def\next{\relax}
        \else
            \csname #1\endcsname{##1}
            \def\next{\csname multi#1\endcsname}
        \fi
        \next}
    \csname multi#1\endcsname}
\def\calc#1{\expandafter\def\csname c#1\endcsname{{\mathcal #1}}}
\def\bbc#1{\expandafter\def\csname bb#1\endcsname{{\mathbb #1}}}
\def\bfc#1{\expandafter\def\csname bf#1\endcsname{{\mathbf #1}}}
\def\sfc#1{\expandafter\def\csname s#1\endcsname{{\sf #1}}}
\def\fc#1{\expandafter\def\csname f#1\endcsname{{\mathfrak #1}}}
\theoremstyle{plain}
\newtheorem{theorem}{Theorem}[section]
\newtheorem*{thm*}{Theorem}
\newtheorem{corollary}[theorem]{Corollary}
\newtheorem*{corollary*}{Corollary}
\newtheorem{lemma}[theorem]{Lemma}
\newtheorem*{lemma*}{Lemma}
\newtheorem{proposition}[theorem]{Proposition}
\newtheorem*{proposition*}{Proposition}
\theoremstyle{definition}
\newtheorem*{defn*}{Definition}
\newtheorem*{Ex*}{Example}
\renewcommand{\Im}{\mathrm{Im}}
\renewcommand{\Re}{\mathrm{Re}}
\DeclareRobustCommand{\SkipTocEntry}[5]{}
\newcounter{sidenote}
\begin{document}

\title[A Positive Proportion Livshits Theorem]{A Positive Proportion Livshits Theorem}

\author[C. Dilsavor]{Caleb Dilsavor}
\address[Caleb Dilsavor]{The Ohio State University, 231 West 18$^{\text{th}}$ Avenue, 43210, Columbus, OH, USA}
\email{\href{mailto:dilsavor.4@osu.edu}{dilsavor.4@osu.edu}}

\author[J. Marshall Reber]{James Marshall Reber}
\address[James Marshall Reber]{The Ohio State University, 231 West 18$^{\text{th}}$ Avenue, 43210, Columbus, OH, USA}
\email{\href{mailto:marshallreber.1@osu.edu}{marshallreber.1@osu.edu}}

\date{\today}

\begin{abstracts}
\abstractin{english}
Given a transitive Anosov diffeomorphism or flow on a closed connected Riemannian manifold $M$, the Livshits theorem states that a H\"{o}lder function $\phi : M \to \R$ is a coboundary if all of its periods vanish. We explain how a finer statistical understanding of the distribution of these periods can be used to obtain a stronger version of the classical Livshits theorem where one only has to check that the periods of $\phi$ vanish on a set of positive asymptotic upper density. We also include a strengthening of the nonpositive Livshits theorem.
\end{abstracts}

\subjclass[2020]{37D20, 37C35, 37D35}
\thanks{This work was partially supported by NSF grant DMS-$1954463$.}

\maketitle

\section{Introduction}
Let $M$ be a closed connected Riemannian manifold and let $f^t : M \rightarrow M$ be a transitive Anosov diffeomorphism or flow.  
If $\phi_1, \, \phi_2 : M \to \R$ are H\"{o}lder, we say that $\phi_1$ and $\phi_2$ are \emph{cohomologous} if there is a H\"{o}lder continuous $\kappa : M \rightarrow \R$ such that 
\begin{align*} 
\phi_1-\phi_2 &= \begin{dcases} \kappa \circ f - \kappa &\text{if } f \text{ is a diffeomorphism}, \\ 
\left.\frac{d}{dt}\right|_{t=0} (\kappa \circ f^t) & \text{if } f^t \text{ is a flow}.
\end{dcases} 
\end{align*}
We write $\phi_1 \sim \phi_2$ if they are cohomologous, and we say that $\phi$ is a \emph{coboundary} if $\phi \sim 0$.

Let $P$ be the collection of all closed orbits of $f^t$. For each $\gamma \in P$, we denote its length by $\ell(\gamma)$ and let $x_\gamma \in \gamma$ be an arbitrary point. Given a H\"{o}lder continuous function $\phi : M \rightarrow \R$, its \emph{period} $\ell_\phi(\gamma)$ along a closed orbit $\gamma$ is given by\[
    \ell_\phi(\gamma) \coloneqq \begin{dcases}\sum_{k=0}^{\ell(\gamma)-1} \phi(f^k(x_\gamma)) & \text{if } f \text{ is a diffeomorphism}, \\
    \int_0^{\ell(\gamma)} \phi(f^s(x_\gamma))\,ds & \text{if } f^t \text{ is a flow}.
    \end{dcases}
\]
If $\phi$ is a coboundary then all of its periods $\ell_\phi(\gamma)$ must vanish, and if $\phi$ is cohomologous to a nonpositive function then all of its periods must be nonpositive. The converses also hold: this is the content of the Livshits theorem \cite{Livshits} and the nonpositive Livshits theorem \cite{lopes1, lopes2}. 

We will show that the hypothesis in the Livshits theorem only needs to be verified on a subset of $P$ that has positive asymptotic upper density with respect to some H\"{o}lder weight function $\psi$. In the unweighted diffeomorphism case, for example, our result states
\[ \limsup_{n \rightarrow \infty} \frac{|\{\gamma \in P \ | \ \ell_\phi(\gamma) = 0 \text{ and } \ell(\gamma) = n\}|}{|\{\gamma \in P \ | \ \ell(\gamma) = n \}|} > 0 \implies \phi \sim 0.\]
Allowing for a weight function $\psi$ generalizes this considerably and requires a new weighted version of the Cantrell-Sharp central limit theorem in \cite{cantrellsharp}. See Theorem \ref{thm:CCLT} and Section \ref{sec:CLTs}.

The motivation to prove this result originated with the recent rigidity work of Gogolev and Rodriguez Hertz \cite[Theorem 1.1]{GRH}. The positive proportion Livshits theorem is used in their proof that a $C^0$ conjugacy between $C^r$ ($r > 2$) volume preserving Anosov flows on a $3$-dimensional manifold must be $C^{r_*}$ if one of the flows is not a constant roof suspension, where 
\[ r_* = \begin{dcases} r &\text{if } r \notin \NN \\ r-1 + \text{Lip} &\text{if }r \in \NN. \end{dcases}\]
In the application to their setting, the weight function $\psi$ is taken to be the geometric potential, which we remark generally cannot be reduced to the unweighted case.

In order to state the main results, for a subset $A \subseteq P$ we let $A(n)$ denote the elements of $A$ with length $n$ and $A(T,\Delta)$ the elements of $A$ whose length lies in $(T,T+\Delta]$.

\begin{theorem} \label{thm:positivepropLivshits1}
Let $f : M \rightarrow M$ be a transitive Anosov diffeomorphism, $\phi: M \rightarrow \R$ H\"{o}lder continuous, and $Q \coloneqq \{\gamma \in P \mid \ell_\varphi(\gamma) = 0\}.$
If there is a H\"{o}lder continuous function $\psi : M \rightarrow \R$ such that
\[ \limsup_{n \rightarrow \infty}  \frac{\sum_{\gamma \in Q(n)} \exp \left( \ell_\psi(\gamma) \right)}{\sum_{\gamma \in P(n)} \exp \left( \ell_\psi(\gamma) \right)} > 0,\]
then $\phi$ is a coboundary.
\end{theorem}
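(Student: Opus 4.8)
The strategy is to combine the thermodynamic formalism with a central limit theorem for the periods $\ell_\phi(\gamma)$ weighted by the Gibbs measure associated to $\psi$. First, I would reduce to the case where $\psi$ is normalized so that the topological pressure $P(\psi) = 0$ and $\mu_\psi$ is the equilibrium state for $\psi$; replacing $\psi$ by $\psi - P(\psi)$ changes neither the hypothesis nor the conclusion, since the constant cancels in the quotient after one accounts for the $e^{-nP(\psi)}$ growth factor in both numerator and denominator. Next, recall the standard counting asymptotics from the thermodynamic formalism (via the transfer operator / zeta function machinery of Parry–Pollicott): $\sum_{\gamma \in P(n)} \exp(\ell_\psi(\gamma)) \sim C \, e^{n h}$ for appropriate $h = P(\psi)$ (which we've normalized), so the denominator is comparable to a fixed constant times the total Gibbs mass of length-$n$ orbits. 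The hypothesis then says precisely that the $\mu_\psi$-weighted proportion of length-$n$ orbits on which $\ell_\phi$ vanishes does not go to zero along a subsequence $n_k \to \infty$.

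The heart of the argument is the central limit theorem. By Livshits, it suffices to show $\phi \sim c$ for some constant $c$, and then that $c = 0$; standard arguments show $\phi \sim c$ iff $\var_{\mu_\psi}(\phi) = 0$ in the appropriate sense (the asymptotic variance vanishes), so the real work is to derive a contradiction from the assumption that $\phi$ is \emph{not} cohomologous to a constant. If $\phi \not\sim \text{const}$, then the weighted central limit theorem — the new weighted version of the Cantrell–Sharp CLT referenced in the excerpt as Theorem \ref{thm:CCLT} — asserts that, under the measure on $P(n)$ that assigns mass proportional to $\exp(\ell_\psi(\gamma))$, the normalized periods $\ell_\phi(\gamma)/\sqrt{n}$ (after subtracting the mean $n \int \phi \, d\mu_\psi$, which we may also assume vanishes since all relevant periods are $0$) converge in distribution to a nondegenerate Gaussian $\mathcal{N}(0,\sigma^2)$ with $\sigma^2 > 0$. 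But a nondegenerate Gaussian assigns zero mass to any single point, in particular to $\{0\}$; hence the $\mu_\psi$-weighted proportion of $\gamma \in P(n)$ with $\ell_\phi(\gamma) = 0$ must tend to $0$ as $n \to \infty$ — even more, any fixed bounded set contributes $o(1)$ because the distribution spreads out at scale $\sqrt{n}$. This contradicts the hypothesis. Therefore $\phi \sim c$. Finally, since $Q(n)$ is nonempty for infinitely many $n$ (by the positive-density hypothesis), there exist closed orbits with $\ell_\phi(\gamma) = 0$; but $\phi \sim c$ forces $\ell_\phi(\gamma) = c\, \ell(\gamma)$ for every $\gamma$, so $c\, n = 0$ for arbitrarily large $n$, giving $c = 0$ and hence $\phi \sim 0$.

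The main obstacle is establishing the weighted CLT itself, i.e. Theorem \ref{thm:CCLT}: the classical Cantrell–Sharp result counts orbits with the uniform (or Bowen–Margulis) weighting, whereas here one must run the same Nagaev–Guivarc'h perturbative spectral analysis for the one-parameter family of transfer operators $\mathcal{L}_{\psi + it\phi}$ twisted by $\psi$ rather than by the measure of maximal entropy, and then feed the resulting analytic behavior of the dynamical zeta function into a Tauberian/contour-integration argument to extract the local limit behavior of the length-$n$ (or length-in-$(T,T+\Delta]$) period sums. One has to check that the relevant leading eigenvalue $\lambda(t)$ of $\mathcal{L}_{\psi+it\phi}$ is real-analytic, that $\lambda(0) = e^{P(\psi)}$, that $\lambda'(0)$ recovers $\int \phi \, d\mu_\psi$, that $-\lambda''(0)/\lambda(0) + (\lambda'(0)/\lambda(0))^2 = \sigma^2 \geq 0$ equals the asymptotic variance, and crucially that $\sigma^2 = 0$ happens \emph{only} when $\phi$ is cohomologous to a constant — the last being exactly the rigidity input that lets the CLT detect coboundaries. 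Once Theorem \ref{thm:CCLT} is in hand with a nondegeneracy clause of this form, the deduction above is essentially immediate; all the analytic difficulty is front-loaded into the CLT.
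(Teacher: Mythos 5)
Your proposal follows essentially the same route as the paper: use a weighted central limit theorem for the periods to show that the positive-proportion hypothesis forces the dynamical variance $\sigma^2_{\phi,\psi}$ to vanish, conclude that $\phi$ is cohomologous to a constant, and then use the existence of a single zero period to kill the constant. Two remarks. First, for the diffeomorphism case the required CLT is not the new weighted Cantrell--Sharp theorem (Theorem \ref{thm:CCLT}, which is the flow statement) but the already-known Coelho--Parry central limit theorem for shifts of finite type (Theorem \ref{thm:discreteCLT} in the paper), so the analytic difficulty you front-load into proving a weighted CLT only arises for Theorem \ref{thm:positivepropLivshits2}. Second, and more substantively, your parenthetical ``we may also assume [the mean] vanishes since all relevant periods are $0$'' is not a valid reduction: the vanishing of $\ell_\phi$ on $Q$ does not imply $\mu_\psi(\phi)=0$ a priori --- ruling that out is part of what must be proved. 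The case $\mu_\psi(\phi)\neq 0$ has to be treated separately; the paper does so by noting that the event $\{\ell_\phi=0\}$ corresponds to $X_n=-\mu_\psi(\phi)\sqrt{n}$ for the centered, normalized variable $X_n$, a point escaping to infinity, so its mass tends to zero by tightness of the weakly converging sequence $X_n$. Your observation that ``any fixed bounded set contributes $o(1)$'' covers only the zero-mean case, so this case split must be made explicit; with that repair the deduction is exactly the paper's.
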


\begin{theorem} \label{thm:positivepropLivshits2}
Let $f^t : M \rightarrow M$ be a transitive Anosov flow whose stable and unstable distributions are not jointly integrable,
$\phi : M \rightarrow \R$ H\"{o}lder continuous, $\Delta > 0$, and
${Q \coloneqq \{\gamma \in P \mid \ell_\phi(\gamma) = 0\}}$.
If there is a H\"{o}lder continuous function $\psi : M \rightarrow \R$ such that
\[ \limsup_{T \rightarrow \infty} \frac{\sum_{\gamma \in Q(T,\Delta)} \exp \left( \ell_\psi(\gamma) \right)}{\sum_{\gamma \in P(T,\Delta)} \exp \left( \ell_\psi(\gamma) \right)} > 0,\]
then $\phi$ is a coboundary. If the topological pressure of $\psi$ is positive, then the same statement holds for $P(0,T)$ and $Q(0,T)$ in place of $P(T,\Delta)$ and $Q(T,\Delta)$, respectively.
\end{theorem}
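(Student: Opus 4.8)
The plan is to argue by contradiction. Suppose $\phi$ is not a coboundary; I will show that the weighted proportion of closed orbits on which $\ell_\phi$ vanishes must tend to $0$, contradicting the hypothesis. There are two cases. If $\phi$ is cohomologous to a constant $c$, then, since cohomologous functions have equal periods and the period of the constant $c$ along $\gamma$ is $c\,\ell(\gamma)$, we have $\ell_\phi(\gamma)=c\,\ell(\gamma)$ for every $\gamma\in P$; as $\ell(\gamma)>0$ this is nonzero unless $c=0$. So either $c=0$, i.e.\ $\phi\sim 0$, contrary to assumption, or $Q=\emptyset$, in which case the numerator in the hypothesis vanishes identically and its $\limsup$ is $0$ --- a contradiction. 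Hence I may assume $\phi$ is not cohomologous to any constant.

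In this case I would invoke the weighted Cantrell-Sharp central limit theorem, Theorem~\ref{thm:CCLT}, whose hypotheses --- a transitive Anosov flow with not jointly integrable stable and unstable distributions, $\psi$ H\"older, and $\phi$ H\"older and not cohomologous to a constant --- are now all in force. It supplies a variance $\sigma^2>0$ and a constant $\bar\phi$, the integral of $\phi$ against the equilibrium state of $\psi$, such that the $\psi$-weighted probability measures
\[
\mu_T \;\coloneqq\; \frac{\sum_{\gamma\in P(T,\Delta)} e^{\ell_\psi(\gamma)}\,\delta_{(\ell_\phi(\gamma)-\ell(\gamma)\bar\phi)/\sqrt{\ell(\gamma)}}}{\sum_{\gamma\in P(T,\Delta)} e^{\ell_\psi(\gamma)}}
\]
converge weakly to $N(0,\sigma^2)$ as $T\to\infty$. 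The weighted proportion in the statement equals the $\mu_T$-mass of a single point, the common image under $\gamma\mapsto(\ell_\phi(\gamma)-\ell(\gamma)\bar\phi)/\sqrt{\ell(\gamma)}$ of all $\gamma$ with $\ell_\phi(\gamma)=0$: this point is $0$ when $\bar\phi=0$, and otherwise equals $-\bar\phi\sqrt{\ell(\gamma)}$, of absolute value at least $|\bar\phi|\sqrt{T}\to\infty$. In the first situation, for each $\delta>0$ the portmanteau theorem gives $\limsup_T\mu_T([-\delta,\delta])\le N(0,\sigma^2)([-\delta,\delta])$, and letting $\delta\to 0$ forces $\mu_T(\{0\})\to 0$. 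In the second, for $T$ large the point lies in a fixed Gaussian tail $(-\infty,-A]$ or $[A,\infty)$, whose $\mu_T$-mass the same portmanteau bound sends to $0$ as $A\to\infty$. Either way the weighted proportion of $Q(T,\Delta)$ in $P(T,\Delta)$ tends to $0$, contradicting the assumed positive $\limsup$; this proves the first assertion.

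For the cumulative statement I would use that when the topological pressure of $\psi$ is positive the weighted counting function $T\mapsto\sum_{\gamma\in P(0,T)} e^{\ell_\psi(\gamma)}$ grows at the exponential rate equal to that pressure, so for each fixed $A$ all but an $O(e^{-\mathcal{P}(\psi)A})$ fraction of its mass is carried by orbits whose length lies in the top window $(T-A,T]$. The cumulative weighted period distribution is therefore a mixture of the windowed ones, dominated by windows near $T$, and the central limit theorem with the same $\sigma^2>0$ persists for the family $P(0,T)$ --- obtained either from the windowed case just treated or directly as the corresponding clause of Theorem~\ref{thm:CCLT}. The argument of the previous paragraph then applies, with the bulk of the $\mu_T$-mass concentrated in windows near $T$, yielding the conclusion with $P(0,T),\,Q(0,T)$ in place of $P(T,\Delta),\,Q(T,\Delta)$.

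I expect the substantive difficulty to lie entirely in Theorem~\ref{thm:CCLT}. Proving weak Gaussian convergence of the $e^{\ell_\psi}$-weighted period distribution over a prescribed length window requires an analysis, via transfer operators and thermodynamic formalism, of the perturbed pressure function $s\mapsto\mathcal{P}(\psi+s\phi)$ and its analyticity, a weighted equidistribution statement for closed-orbit measures, and --- in the flow case --- contour and oscillatory-integral estimates for the associated dynamical zeta function or Laplace transform that use the non-joint-integrability of the stable and unstable distributions both to exclude lattice obstructions and to localize in the length variable. By comparison, the reduction carried out above is purely formal.
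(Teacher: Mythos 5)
Your argument has the same architecture as the paper's: reduce to the central limit theorem (Theorem \ref{thm:CCLT}), observe that a positive weighted proportion of vanishing periods would place persistent mass either at an atom of the limiting Gaussian (when $\mu_\psi(\phi)=0$) or in an escaping tail (when $\mu_\psi(\phi)\neq 0$), and handle the cumulative case by showing that the $\psi$-weighted mass of $P(0,T)$ concentrates in a top window. Your case split on whether $\phi$ is cohomologous to a constant is logically equivalent to the paper's assumption $\sigma^2_{\phi,\psi}>0$ for contradiction, and your portmanteau estimates are correct.

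There is, however, one genuine gap. The first clause of the theorem places no hypothesis on $\PP(\psi)$, whereas Theorem \ref{thm:CCLT} is stated (and proved) only under $\PP(\psi)>0$; its conclusion is not available for an arbitrary H\"older $\psi$, so your invocation of it is not yet justified. The paper fixes this by replacing $\psi$ with $\psi'=\psi+c$ for $c$ large enough that $\PP(\psi')>0$, and the essential observation is that the positive-proportion hypothesis survives this replacement because lengths in the window $(T,T+\Delta]$ vary by at most $\Delta$:
\[ \frac{\sum_{\gamma \in Q(T,\Delta)} e^{\ell_{\psi'}(\gamma)}}{\sum_{\gamma \in P(T,\Delta)} e^{\ell_{\psi'}(\gamma)}} \;\geq\; e^{-c\Delta}\, \frac{\sum_{\gamma \in Q(T,\Delta)} e^{\ell_{\psi}(\gamma)}}{\sum_{\gamma \in P(T,\Delta)} e^{\ell_{\psi}(\gamma)}}. \]
(No such comparison holds for $P(0,T)$, which is exactly why the second clause carries the hypothesis $\PP(\psi)>0$.) A smaller gloss in your cumulative argument: knowing only that $\sum_{\gamma\in P(0,T)}e^{\ell_\psi(\gamma)}$ has exponential growth rate $\PP(\psi)$ does not imply that a top window of width $A$ carries all but an $O(e^{-\PP(\psi)A})$ fraction of the mass; for that you need the sharp asymptotic $\sum_{\gamma\in P(0,T)}e^{\ell_\psi(\gamma)}\sim e^{\PP(\psi)T}/(\PP(\psi)T)$ of Proposition \ref{prop:asymp}. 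With it, the paper transfers the cumulative positive-proportion hypothesis to a window of sufficiently large width $\Delta$ (so that $e^{-\PP(\psi)\Delta}$ is less than half the assumed limsup) and then applies the first clause, which is an equivalent route to the one you sketch.
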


While our primary motivation is to show these generalizations for Anosov systems, we remark that Theorems \ref{thm:positivepropLivshits1} and \ref{thm:positivepropLivshits2} 
also hold for Axiom A diffeomorphisms and hyperbolic flows satisfying an approximability condition given in \cite{pollicottsharp}.
The extra hypothesis in the flow case comes from an application of Dolgopyat's bounds on iterates of the transfer operator \cite{dolgopyat1998prevalence, pollicottsharp}. 
If the Plante conjecture is true, then the only Anosov flows to which Theorem \ref{thm:positivepropLivshits2} does not apply are conjugate to constant roof suspensions of Anosov diffeomorphisms, and one can apply Theorem \ref{thm:positivepropLivshits1} instead. The Plante conjecture is known for 3-dimensional volume preserving flows \cite{plante},
and hence this is not an issue in \cite{GRH}.

Since it is natural to wonder what can be said for the nonpositive Livshits theorem, we show a strengthening of this as well.

\begin{theorem} \label{thm:positivepropLivshits3}
Let $f : M \rightarrow M$ be a transitive Anosov diffeomorphism, $\phi : M \rightarrow \R$ H\"{o}lder continuous, and $Q' \coloneqq \{\gamma \in P \mid \ell_\phi(\gamma) > 0\}$.
If 
\[\lim_{n\to\infty}\frac{1}{n}\log|Q'(n)| = 0,\]
then $\phi$ is cohomologous to a nonpositive function.
\end{theorem}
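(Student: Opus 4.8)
The strategy is to reduce to the classical nonpositive Livshits theorem. Since cohomologous Hölder functions have equal periods and a nonpositive function has nonpositive periods, while conversely the nonpositive Livshits theorem of \cite{lopes1, lopes2} promotes ``all periods $\le 0$'' to ``cohomologous to a nonpositive function'', the conclusion of the theorem is equivalent to $Q' = \emptyset$. I would therefore argue by contraposition: assuming there is a single closed orbit $\gamma_0$ with $a \coloneqq \ell_\phi(\gamma_0) > 0$, I will produce exponentially many closed orbits of each large length $n$ with positive $\phi$-period, which forces $\liminf_{n\to\infty}\frac{1}{n}\log|Q'(n)| > 0$ and contradicts the hypothesis. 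The underlying mechanism is simple: an orbit that follows $\gamma_0$ for a definite fraction of a long time accumulates enough positive ``budget'' to afford behaving arbitrarily the rest of the time, and there are exponentially many ways to behave arbitrarily.

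To make this precise I would pass to a Markov coding: a factor map $\pi : (\Sigma_A, \sigma) \to (M, f)$ from a subshift of finite type which, apart from the finitely many boundary orbits (immaterial here), matches closed orbits of $f$ and of $\sigma$ together with their lengths and periods, and under which $\phi$ lifts to a Hölder function on $\Sigma_A$, still written $\phi$ and with $\|\phi\|_\infty$ unchanged. A transitive Anosov diffeomorphism of a connected manifold is topologically mixing, so the coding may be chosen with $\Sigma_A$ mixing; then for all large $L$ the number of admissible words of length $L$ joining two prescribed states is $\asymp e^{hL}$, where $h = h_{\mathrm{top}}(f) > 0$. Fix a primitive admissible word $w$ of length $p = \ell(\gamma_0)$ coding $\gamma_0$, so $\sum_{i=0}^{p-1}\phi(\sigma^i w^\infty) = a$. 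For a large integer $K$ and an admissible word $c$ of length $r$ for which $w^K c$ is admissible and periodic, let $\gamma_{K,c}$ be the closed orbit coded by $w^K c$; it has length $Kp + r$.

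The key estimate is that by Hölder continuity $|\phi(\sigma^i (w^K c)^\infty) - \phi(\sigma^i w^\infty)|$ decays geometrically in $\min(i, Kp - i)$ for $0 \le i < Kp$, since the two sequences agree on a long central block; summing, $\sum_{i=0}^{Kp-1}\phi(\sigma^i(w^Kc)^\infty) = Ka + O(1)$ with the error uniform in $K$, and since the remaining $r$ terms contribute at most $r\|\phi\|_\infty$ in absolute value, $\ell_\phi(\gamma_{K,c}) \ge Ka - r\|\phi\|_\infty - O(1)$. As $a \le p\|\phi\|_\infty$, the ratio $\rho \coloneqq a/(a + p\|\phi\|_\infty)$ lies in $(0, \tfrac12]$, and the bound is positive whenever $n \coloneqq Kp + r$ and $r < \rho n - O(1)$. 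So for each large $n$ I take $r = r(n)$ with $r \equiv n \pmod p$ and $r = \rho n + O(1)$, set $K = (n-r)/p$, and note that a Fine--Wilf argument makes $w^K c$ primitive for all but at most one $c$, while at most $n$ of these words code the same closed orbit; hence $|Q'(n)| \gtrsim e^{hr}/n \ge e^{h\rho n/2}$ for $n$ large, the desired contradiction.

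The conceptual content is entirely in the previous paragraph; the main work is bookkeeping inside the symbolic model --- ensuring admissibility of the concatenations simultaneously with the congruence $r \equiv n \pmod p$, controlling primitivity and overcounting (Fine--Wilf), and recalling that the coding respects lengths and periods of closed orbits up to the irrelevant boundary orbits. If one prefers to avoid these manipulations, essentially the same lower bound follows thermodynamically: pick ergodic measures $\mu$ with $\int\phi\,d\mu > 0$ and $\nu$ with $h(\nu) > 0$, interpolate $\mu_t = t\mu + (1-t)\nu$ with $t$ close to $1$ so that $\int\phi\,d\mu_t > 0$ and $h(\mu_t) > 0$, and apply the standard count of closed orbits of length $n$ whose empirical measures lie near $\mu_t$ --- all of these lie in $Q'(n)$, and there are $\gtrsim e^{(h(\mu_t)-\varepsilon)n}$ of them.
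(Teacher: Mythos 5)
Your proposal is correct in substance but takes a genuinely different route from the paper's. The paper argues directly rather than by contraposition: Lemma \ref{lem:subexp} shows that subexponential growth of $Q'$ forces $\mu_\psi(\phi)\le 0$ for \emph{every} H\"older $\psi$ (the key input being the strict gap $\ell_\psi(\gamma)\le(\PP(\psi)-\epsilon)\ell(\gamma)$ of Corollary \ref{cor:strictbound}, which makes the $Q'$-part of the weighted orbit sums exponentially negligible), and then a zero-temperature limit --- $\delta_\gamma(\phi)\le h(\mu_s)/s+\mu_s(\phi)$ for the equilibrium states $\mu_s$ of $s\phi$, with $s\to\infty$ --- yields $\delta_\gamma(\phi)\le 0$ for every closed orbit, i.e.\ $Q'=\emptyset$, after which the classical nonpositive Livshits theorem finishes. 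You instead show the contrapositive by a symbolic gluing construction: one positive period seeds exponentially many. Both proofs funnel through ``every period is nonpositive'' plus the classical theorem. Your route is more elementary and quantitative (a single period $a>0$ on an orbit of length $p$ forces $\limsup_n\tfrac1n\log|Q'(n)|\gtrsim h_{\mathrm{top}}(f)\,a/(a+p\|\phi\|_\infty)$); the paper's avoids Markov partitions entirely, which is why it transfers verbatim to the flow case. Your closing thermodynamic variant (interpolating $\delta_{\gamma_0}$ with a positive-entropy measure and invoking the periodic-orbit large-deviations lower bound) is also valid and is essentially the same gluing argument in disguise, and is the closest of your options in spirit to the paper's.

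One caution on the bookkeeping you defer: the failure of the coding to respect prime periods is not confined to ``finitely many boundary orbits.'' The exceptional set is governed by an auxiliary subshift of entropy $h'<h_{\mathrm{top}}(f)$, and since your lower bound is only of order $e^{h_{\mathrm{top}}(f)\rho n}$ with $\rho\le\tfrac12$, you cannot a priori exclude that many of the words $w^Kc$ project to $f$-orbits whose length is a proper divisor $m$ of $n$, so the stated bound on $|Q'(n)|$ for that exact $n$ is not justified as written. This is harmless for the theorem: such an orbit still lies in $Q'(m)$ with $m\le n$ (the period sum scales by $m/n>0$), each closed orbit absorbs at most $O(n)$ of your words since $\pi$ is boundedly finite-to-one, and hence $\max_{m\le n}|Q'(m)|\gtrsim e^{h_{\mathrm{top}}(f)\rho n}/n^2$, which already contradicts the hypothesis. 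State the conclusion as a bound on $\limsup_m\tfrac1m\log|Q'(m)|$ rather than on each $|Q'(n)|$ and the argument closes.
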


\begin{theorem} \label{thm:positivepropLivshits4}
Let $f^t : M \rightarrow M$ be a transitive Anosov flow whose stable and unstable foliations are not jointly integrable,
$\phi : M \rightarrow \R$ H\"{o}lder continuous, $\Delta > 0$, and ${Q' \coloneqq \{\gamma \in P \mid \ell_\phi(\gamma) > 0\}}$.
If 
\[\lim_{T\to\infty}\frac{1}{T}\log|Q'(T, \Delta)| = 0,\]
then $\phi$ is cohomologous to a nonpositive function.
The same statement holds for $Q'(0,T)$ in place of $Q'(T,\Delta)$.
\end{theorem}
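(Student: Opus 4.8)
The plan is to argue by contraposition: supposing $\phi$ is \emph{not} cohomologous to a nonpositive function, I will produce, for all large $T$, exponentially many closed orbits of length at most $T$ on which $\ell_\phi$ is strictly positive. This is precisely the negation of the hypothesis in the $Q'(0,T)$ form of the theorem, and it also refutes $\lim_{T\to\infty}\frac{1}{T}\log|Q'(T,\Delta)|=0$ for the fixed $\Delta$: each $\gamma\in Q'$ has $\ell(\gamma)\in(j\Delta,(j+1)\Delta]$ for a unique $j\ge 0$, so $|Q'(0,T)|\le\sum_{0\le j<T/\Delta}|Q'(j\Delta,\Delta)|$, whence sub-exponential growth of all the window counts would force the cumulative count $|Q'(0,T)|$ to grow sub-exponentially as well.

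First I would extract a single ``bad'' orbit. By the classical nonpositive Livshits theorem \cite{lopes1,lopes2}, if every period of $\phi$ were $\le 0$ then $\phi$ would be cohomologous to a nonpositive function; hence there is a closed orbit $\gamma_0$ with $\ell_\phi(\gamma_0)>0$, equivalently $\int\phi\,d\mu_{\gamma_0}=\ell_\phi(\gamma_0)/\ell(\gamma_0)>0$, where $\mu_\gamma$ denotes the flow-invariant probability measure equidistributed on $\gamma$. I would then upgrade this to a measure of positive entropy: writing $h:=h_{\mathrm{top}}(f^t)>0$ (positive because $f^t$ is a transitive Anosov flow) and $\mu_{\max}$ for a measure of maximal entropy, affinity of entropy and linearity of $\mu\mapsto\int\phi\,d\mu$ show that $\nu_s:=(1-s)\mu_{\gamma_0}+s\mu_{\max}$ satisfies $h(\nu_s)=sh$ while $\int\phi\,d\nu_s\to\int\phi\,d\mu_{\gamma_0}>0$ as $s\to0^+$; fixing a small $s>0$ with $\int\phi\,d\nu_s>0$, I set $\nu:=\nu_s$, $\eta:=h(\nu)>0$, and $\rho:=\int\phi\,d\nu>0$.

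The main input I would invoke is the level-$2$ large deviations lower bound for closed orbits of a topologically mixing Anosov flow (the non-joint-integrability hypothesis guarantees topological mixing, as in the setting of Theorem \ref{thm:positivepropLivshits2}): for every weak-$\ast$ open set $U$ of $f^t$-invariant probability measures,
\[\liminf_{T\to\infty}\frac{1}{T}\log\big|\{\gamma\in P\mid\ell(\gamma)\le T,\ \mu_\gamma\in U\}\big|\ \ge\ \sup_{\mu\in U}h(\mu).\]
Applying this to $U:=\{\mu\mid\int\phi\,d\mu>\rho/2\}$, which is open and contains $\nu$ so that the supremum on the right is $\ge h(\nu)=\eta$, produces at least $e^{(\eta-o(1))T}$ closed orbits of length $\le T$ with $\int\phi\,d\mu_\gamma>\rho/2$; each such $\gamma$ satisfies $\ell_\phi(\gamma)=\ell(\gamma)\int\phi\,d\mu_\gamma>\tfrac{\rho}{2}\ell(\gamma)>0$, hence lies in $Q'$. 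Therefore $|Q'(0,T)|\ge e^{(\eta-o(1))T}$, which is what was needed.

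The hard part will be the large deviations bound. I would prove it by applying the Anosov closing lemma to an $(\ell,\epsilon)$-separated family of $\nu$-generic orbit segments — of which there are $e^{(h(\nu)-o(1))\ell}$ — to obtain that many \emph{distinct} closed orbits of length $\ell+O(1)$ whose empirical measures are close to $\nu$; verifying that the shadowing orbits are genuinely distinct and that their lengths are tightly enough controlled is the delicate book-keeping, and it is here that topological mixing (hence the non-joint-integrability hypothesis) enters, much as in the refined orbit counts of \cite{pollicottsharp}. The same scheme, with no window book-keeping needed, also proves Theorem \ref{thm:positivepropLivshits3}: a transitive Anosov diffeomorphism is automatically topologically mixing, and the estimate required there is the classical level-$2$ large deviations lower bound for mixing subshifts of finite type.
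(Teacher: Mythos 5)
Your route is genuinely different from the paper's and is viable in outline, but as written the key lemma has a gap. The measure $\nu=(1-s)\mu_{\gamma_0}+s\mu_{\max}$ you build is \emph{not ergodic}, so the step ``there are $e^{(h(\nu)-o(1))\ell}$ many $(\ell,\epsilon)$-separated $\nu$-generic orbit segments'' is not justified by Katok's entropy formula, and ``$\nu$-generic'' is itself problematic: for a non-ergodic measure the Birkhoff averages of $\nu$-typical points do not converge to $\nu(\phi)$. The level-$2$ lower bound you state, with the supremum over \emph{all} (not just ergodic) measures in $U$, is true for these flows, but proving it requires an additional entropy-density argument (approximating $\nu$ by ergodic measures of nearly the same entropy, via specification or horseshoes) -- exactly the point your sketch elides; alternatively it can simply be cited (Kifer's large-deviations theorem for periodic orbits of hyperbolic flows). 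A cleaner fix inside your own scheme: take $\nu=\mu_{s\phi}$, the equilibrium state of $s\phi$ for large $s>0$. It is ergodic with positive entropy, and since $\PP(s\phi)\ge h(\delta_{\gamma_0})+s\delta_{\gamma_0}(\phi)=s\,\delta_{\gamma_0}(\phi)$ while $\PP(s\phi)=h(\mu_{s\phi})+s\mu_{s\phi}(\phi)\le h_{\mathrm{top}}+s\mu_{s\phi}(\phi)$, one gets $\mu_{s\phi}(\phi)\ge\delta_{\gamma_0}(\phi)-h_{\mathrm{top}}/s>0$ for $s$ large; with an ergodic $\nu$ the separated-set count and the closing-lemma multiplication go through as you describe, with only polynomial overcounting from segments shadowed by the same closed orbit. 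Your reduction from the windowed hypothesis to the cumulative count, and the contrapositive framing via the classical nonpositive Livshits theorem, are both fine.

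For comparison, the paper avoids large deviations and the closing lemma altogether. It argues the contrapositive in the other direction: subexponential growth of $Q'$ forces $\mu_\psi(\phi)\le 0$ for \emph{every} equilibrium state $\mu_\psi$ (Lemma \ref{lem:subexp}), because the strict gap $\ell_\psi(\gamma)\le(\PP(\psi)-\epsilon)\ell(\gamma)$ of Corollary \ref{cor:strictbound} makes the $\psi$-weighted mass of $Q'$ exponentially negligible in the periodic-orbit formula for pressure; a zero-temperature limit $s\to\infty$ applied to $s\delta_\gamma(\phi)\le \PP(s\phi)=h(\mu_{s\phi})+s\mu_{s\phi}(\phi)$ then shows every period of $\phi$ is nonpositive, and the classical nonpositive Livshits theorem finishes. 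That argument uses only the variational principle and the counting expression for pressure, so it is softer and shorter; yours, once the large-deviations input is secured, yields the quantitatively stronger conclusion that failure of the hypothesis produces a definite exponential rate $\eta>0$ of closed orbits with $\ell_\phi(\gamma)>\tfrac{\rho}{2}\ell(\gamma)$.
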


A useful example to keep in mind for Theorems \ref{thm:positivepropLivshits3} and \ref{thm:positivepropLivshits4} is the full one-sided shift on two symbols $\Sigma = \{0,1\}^\NN$ with observable $\phi(x) \coloneqq (-1)^{x_1}$.
Clearly $\phi$ has a positive period.
However, adding weights according to the function
\[
\psi(x) = \begin{cases} \log p, & x_1 = 0 \\ \log(1-p) & x_1 = 1  \end{cases}
\]
where $p \in (0,\frac{1}{2})$, we see that $Q'$ has zero asymptotic density with respect to $\psi$ by the law of large numbers for the $(p,1-p)$ coin flip. Thus $\ell_\phi$ is nonpositive on a set of full proportion with respect to $\psi$, even though $\phi$ is not cohomologous to a nonpositive function.

We end this section with an application of Theorems \ref{thm:positivepropLivshits2} and \ref{thm:positivepropLivshits4} to partial marked length spectrum rigidity. Let $M$ be a closed surface and $g$ a Riemannian metric on $M$ with everywhere negative sectional curvature. Recall that there is a unique closed geodesic of minimal length in every free homotopy class. If we denote the set of free homotopy classes by $\CC$ and the length of a curve $\gamma$ with respect to the metric $g$ by $\ell_g(\gamma)$, then the marked length spectrum is the function 
\[ \MLS_g : \CC \rightarrow \R, \ \ \MLS_g(\sigma) \coloneqq \ell_g(\gamma), \]
where $\gamma \in \sigma$ is the unique $g$-geodesic representative.

It was shown by Sawyer \cite{sawyer} that if $g_1$ and $g_2$ are two negatively curved metrics on $M$ such that $\MLS_{g_1} \geq \MLS_{g_2}$ (respectively $\MLS_{g_1} = \MLS_{g_2})$ on a set of free homotopy classes whose complement has subexponential growth, then $\MLS_{g_1} \geq \MLS_{g_2}$  (respectively $\MLS_{g_1} = \MLS_{g_2})$ on all of $\CC$. Therefore, with this weakened assumption, one still has $\text{Area}(g_1) \geq \text{Area}(g_2)$, with equality if and only if $g_1$ and $g_2$ are isometric. 
We are able to recover and partially improve on these results with Theorems \ref{thm:positivepropLivshits2} and \ref{thm:positivepropLivshits4}.

\begin{theorem} \label{thm:pmls}
Let $M$ be a closed surface, let $g_1, g_2$ be two negatively curved metrics on $M$, and let $\CC$ be the set of free homotopy classes. Define
\[\begin{split} \CC(T) \coloneqq \{ [\gamma] \in \CC \ | \ \MLS_{g_1}(\gamma) \leq T\}, \ \ \B(T) \coloneqq \{ [\gamma] \in \CC(T) \ | \ \MLS_{g_1}(\gamma) = \MLS_{g_2}(\gamma) \},\\ \QQ(T) \coloneqq \{ [\gamma] \in \CC(T) \ | \ \MLS_{g_1}(\gamma) < \MLS_{g_2}(\gamma) \}.\end{split}\]
\begin{enumerate}[(a)]
    \item If $\QQ(T)$ grows subexponentially, then $\text{Area}(g_1) \geq \text{Area}(g_2)$.
    \item If $\limsup_{T \rightarrow \infty} (\# \B(T)/\# \CC(T)) > 0,$
    then $g_1$ and $g_2$ are isometric.
\end{enumerate}
\end{theorem}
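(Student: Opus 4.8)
The plan is to reduce both parts to the Livshits-type Theorems~\ref{thm:positivepropLivshits2} and~\ref{thm:positivepropLivshits4} through the standard orbit equivalence between the two geodesic flows. Let $\phi_i^t$ be the geodesic flow of $g_i$ on the unit tangent bundle $T^1_{g_i}M$. Since $M$ is a closed surface and each $g_i$ has everywhere negative curvature, $\phi_i^t$ is a transitive Anosov flow on a closed connected $3$-manifold, it is a contact flow (so its stable and unstable distributions are not jointly integrable, and neither are the corresponding foliations), and it has positive topological entropy. By structural stability of Anosov flows there is a H\"older orbit equivalence $h \colon T^1_{g_1}M \to T^1_{g_2}M$ carrying $\phi_1$-orbits to $\phi_2$-orbits and respecting the labelling of closed orbits by free homotopy classes; differentiating the associated time change at $t=0$ produces a positive H\"older function $a \colon T^1_{g_1}M \to \R$ whose period over the closed $\phi_1$-orbit $\gamma_\sigma$ representing $\sigma \in \CC$ is the $g_2$-length of $h(\gamma_\sigma)$, i.e.\ $\ell_a(\gamma_\sigma) = \MLS_{g_2}(\sigma)$. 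Setting $\phi \coloneqq a - 1$ and noting that the $\phi_1^t$-length of $\gamma_\sigma$ is $\MLS_{g_1}(\sigma)$, we get $\ell_\phi(\gamma_\sigma) = \MLS_{g_2}(\sigma) - \MLS_{g_1}(\sigma)$; thus, under $\sigma \leftrightarrow \gamma_\sigma$, $\CC(T) = P(0,T)$, $\B(T) = Q(0,T)$ with $Q \coloneqq \{\gamma \in P \mid \ell_\phi(\gamma)=0\}$, and $\QQ(T) = Q'(0,T)$ with $Q' \coloneqq \{\gamma \in P \mid \ell_\phi(\gamma) > 0\}$.

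For part (a), the hypothesis reads $\lim_{T\to\infty}\frac{1}{T}\log|Q'(0,T)| = 0$, so Theorem~\ref{thm:positivepropLivshits4} applies and $\phi$ is cohomologous to a nonpositive H\"older function $\phi'$. Cohomologous functions have equal periods along closed orbits and $\ell_{\phi'}(\gamma)\le 0$, hence $\ell_\phi(\gamma_\sigma) \le 0$, i.e.\ $\MLS_{g_2} \le \MLS_{g_1}$ on all of $\CC$. Then $\text{Area}(g_1) \ge \text{Area}(g_2)$ by the classical comparison of areas under pointwise domination of marked length spectra used in \cite{sawyer} (via Bonahon's geodesic currents: $\text{Area}(g)$ is a fixed multiple of the self-intersection of the Liouville current, and the intersection form is symmetric and monotone in the marked length spectrum).

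For part (b), take the weight $\psi \equiv 0$. Its topological pressure equals the topological entropy of $\phi_1^t$, which is positive, and $\sum_{\gamma \in Q(0,T)}\exp(\ell_\psi(\gamma)) = \#\B(T)$ while $\sum_{\gamma \in P(0,T)}\exp(\ell_\psi(\gamma)) = \#\CC(T)$; hence $\limsup_{T\to\infty}(\#\B(T)/\#\CC(T)) > 0$ is exactly the hypothesis of Theorem~\ref{thm:positivepropLivshits2} for this $\psi$, in the $P(0,T)$ form permitted by positivity of the pressure. Therefore $\phi$ is a coboundary, $\ell_\phi(\gamma_\sigma) = 0$ for all $\sigma$, i.e.\ $\MLS_{g_1} = \MLS_{g_2}$, and Otal's marked length spectrum rigidity theorem for negatively curved closed surfaces gives that $g_1$ and $g_2$ are isometric.

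The step requiring the most care is the first paragraph: one needs the reparametrization function $a$ to be genuinely H\"older (not merely continuous) and the orbit equivalence $h$ to be normalized so that the closed $\phi_1$-orbit labelled by $\sigma$ maps to the closed $\phi_2$-orbit labelled by the same $\sigma$, which makes $\ell_a(\gamma_\sigma) = \MLS_{g_2}(\sigma)$ hold exactly. Both are classical for geodesic flows on negatively curved manifolds. Once this dictionary is in place, parts (a) and (b) are immediate from Theorems~\ref{thm:positivepropLivshits4} and~\ref{thm:positivepropLivshits2} together with the two standard external inputs above.
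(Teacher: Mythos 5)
Your proposal is correct and follows essentially the same route as the paper: the same orbit equivalence between the two geodesic flows, the same reparametrization function $\phi = a - 1$ with $\ell_\phi(\gamma_\sigma) = \MLS_{g_2}(\sigma) - \MLS_{g_1}(\sigma)$, and the same reduction of (a) to Theorem~\ref{thm:positivepropLivshits4} plus an area-domination result and of (b) to Theorem~\ref{thm:positivepropLivshits2} (with $\psi \equiv 0$, using positivity of the entropy) plus Otal's rigidity theorem. The only cosmetic difference is that for the area comparison in (a) you invoke Bonahon's geodesic currents as in Sawyer, whereas the paper cites Croke--Dairbekov; both are standard and equivalent for this purpose.
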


\begin{proof}
Let ${f_1^t : S_{g_1}M \rightarrow S_{g_1}M}$ and $f_2^t : S_{g_2}M \rightarrow S_{g_2}M$ be the corresponding geodesic flows. As noted in \cite{gromov}, there is an orbit equivalence $h : S_{g_1}M \rightarrow S_{g_2}M$ which is homotopic to the identity. By \cite[Theorem 19.1.5]{KH}, we may assume that the orbit equivalence $h$ is H\"{o}lder continuous and differentiable along $f_1^t$. Let $\alpha$ be the cocycle over $f_1^t$ satisfying 
    \[ h(f_1^t(v)) = f_2^{\alpha(v,t)}(h(v)),\]
and let 
    \[\phi : S_{g_1}M \rightarrow \R, \ \ \phi(v) \coloneqq \left.\frac{d}{dt}\right|_{t=0} \alpha(v,t) - 1.\]
If $\sigma \in \CC$ and $\gamma \in \sigma$ is the $g_1$-geodesic representative, then we have
\[ \ell_\phi(\gamma) = \MLS_{g_2}(\sigma) - \MLS_{g_1}(\sigma).\]
It is clear that (a) follows from Theorem \ref{thm:positivepropLivshits4} and \cite[Theorem 1.1]{croke2004lengths}, while (b) follows from Theorem \ref{thm:positivepropLivshits2} and \cite[Theorem 1]{otal}.
\end{proof}

\addtocontents{toc}{\SkipTocEntry}
\subsection*{Acknowledgements}
We thank Andrey Gogolev for suggesting the problem and for reading an early draft, Daniel Thompson for advice and for reading an early draft, and Thomas O'Hare and Stephen Cantrell for useful discussions.

\section{Preliminaries} \label{sec:preliminaries}
We start by collecting basic definitions and thermodynamic results. A $C^1$ diffeomorphism $f : M \rightarrow M$ is \emph{Anosov} 
if the following conditions are satisfied:
\begin{enumerate}
    \item There is a splitting of the tangent bundle $TM = E^s \oplus E^u$ and constants $C > 0$ and $\lambda \in (0,1)$ such that 
        \[ \|D_xf^n(v)\| \leq C \lambda^n \text{ for all } v \in E^s(x), \ n \geq 0,\]
        \[ \|D_xf^{-n}(v)\| \leq C \lambda^n \text{ for all } v \in E^u(x), \ n \geq 0.\]
    \item We have 
    $ Df(E^u) = E^u$ and $Df(E^s) = E^s.$
\end{enumerate}
A $C^1$ flow $f^t : M \rightarrow M$ is \emph{Anosov} if the following conditions are satisfied:
\begin{enumerate}
    \item There is a splitting of the tangent bundle $TM = E^s \oplus E^0 \oplus E^u$ and constants $C > 0$ and $\lambda \in (0,1)$ such that 
        \[ \|D_xf^t(v)\| \leq C \lambda^n \text{ for all } v \in E^s(x), \ t \geq 0,\]
        \[ \|D_xf^{-t}(v)\| \leq C \lambda^n \text{ for all } v \in E^u(x), \ t \geq 0.\]
    \item The distribution $E^0$ is one-dimensional and tangent to the flow.
    \item We have $Df^t(E^u) = E^u$ and $Df^t(E^s) = E^s$ for all $t \in \R.$
\end{enumerate}

Let $f^t$ denote either a transitive Anosov diffeomorphism or a transitive Anosov flow which is weak mixing. In particular, the flow is weak mixing if $E^s \oplus E^u$ is not integrable.
Let $\M_f$ denote the set of $f^t$-invariant Borel probability measures. Given a H\"{o}lder weight function $\psi : M \to \R$, the \emph{topological pressure} $\PP(\psi)$ is
\begin{equation}\label{eq:pressuredef}
\PP(\psi) \coloneqq \sup_{\mu\in \M_f}\left\{h(\mu)+\mu(\psi)\right\},
\end{equation}
where $h(\mu)$ denotes the metric entropy of $\mu$ with respect to $f^1$, and $\mu(\psi)$ denotes the integral of $\psi$ with respect to $\mu$.
This can be computed as
\begin{equation} \label{eqn:pressure}
\PP(\psi) = \lim_{n\to\infty}\frac{1}{n}\log\left(\sum_{\gamma\in P(n)}\exp(\ell_\psi(\gamma))\right)
\end{equation}
for discrete time, and similarly with $P(T,\Delta)$ for continuous time. Equation \eqref{eqn:pressure} also holds for $P(0,T)$ under the assumption that $\PP(\psi) \geq 0$.

A measure achieving the supremum in \eqref{eq:pressuredef} is called an \emph{equilibrium state} for $\psi$. The notion of an equilibrium state for $\psi$ is unchanged if a constant is added to $\psi$ or if $\psi$ is replaced by a cohomologous function. It is a classical result that $\psi$ has a unique equilibrium state $\mu_\psi$ given by
\[
\mu_\psi = \lim_{n\to\infty}\frac{\sum_{\gamma\in P(n)}\exp(\ell_\psi(\gamma))\delta_\gamma}{\sum_{\gamma\in P(n)}\exp(\ell_\psi(\gamma))}
\]
in the diffeomorphism case, where $\delta_\gamma$ denotes the $f^t$-invariant probability measure supported on $\gamma$ \cite{bowen}. The same holds for flows if one replaces $P(n)$ with $P(T,\Delta)$ and assumes $\PP(\psi) \geq 0$  
\cite[Chapter 7]{PP}. 
Equilibrium states in this setting have positive entropy: this can be deduced from \cite[Lemma 2]{parry}.

Given a H\"{o}lder observable $\phi : M \rightarrow \R$ and $\mu \in \M_f$, the \emph{variance} of $\phi$ with respect to $\mu$ is 
\[ \var_{\mu}(\phi) \coloneqq \mu\left( (\phi - \mu(\phi))^2 \right).\]
The \emph{dynamical variance} of $\phi$ with respect to $\mu$ is then
    \[ \sigma^2_{\phi,\mu} \coloneqq \lim_{n \rightarrow \infty} \frac{1}{n}\var_{\mu}(S_n(\phi)), \text{ where } S_n(\phi)(x) \coloneqq \sum_{k=0}^{n-1} \phi(f^k(x))\]
in discrete time, or 
\[ \sigma^2_{\phi,\mu} \coloneqq \lim_{T \rightarrow \infty} \frac{1}{T}\var_{\mu}(S_T(\phi)), \text{ where } S_T(\phi)(x) \coloneqq \int_0^T \phi(f^t(x))\,dt\]
in continuous time.
If $\psi: M \to \R$ is H\"{o}lder, then we let $\sigma^2_{\phi,\psi} \coloneqq \sigma^2_{\phi,\mu_\psi}$, where $\mu_\psi$ is the equilibrium state for $\psi$. Adding constants to $\phi,\psi$ does not change the value of $\sigma^2_{\phi,\psi}$, nor does replacing $\phi, \psi$ with cohomologous functions. In terms of these quantities, the pressure function has the expansion 
\begin{equation} \label{eqn:pressuretaylor}
\PP(\psi+t\phi) = \PP(\psi) + \mu_\psi(\phi)t + \frac{\sigma_{\phi,\psi}^2 }{2}t^2 + o(t^2).
\end{equation}
If $\phi$ is cohomologous to a constant, then
$\sigma_{\phi,\psi}^2 = 0$ for all $\psi$. Conversely, if for even a single H\"{o}lder $\psi$ one has $\sigma_{\phi,\psi}^2 = 0$, then $\phi$ is cohomologous to a constant \cite{PP,ratner2}.

\section{Proofs of the main theorems}

We begin by demonstrating that the main theorems are unaffected if the non-prime orbits are excluded. This will also be useful for obtaining counting estimates in Section \ref{sec:CLTs}. Let $P_* \subseteq P$ denote the collection of prime closed orbits, where a closed orbit $\gamma$ is called \emph{prime} if $\gamma = (\gamma')^n$ for some closed orbit $\gamma'$ implies $\gamma = \gamma'$. Let $Q_*$ and $Q'_*$ be the subsets of $P_*$ defined as in the main theorems. To show Theorems \ref{thm:positivepropLivshits3} and \ref{thm:positivepropLivshits4} are equivalently stated for prime orbits, we simply note that
\[
|Q'_*(n)| \leq |Q'(n)| \leq \sum_{k=1}^n |Q'_*(k)|.
\]
Thus $|Q'_*(n)|$ and $|Q'(n)|$ have the same exponential growth rate. The same argument works for the flow case as well.

As for Theorems \ref{thm:positivepropLivshits1} and \ref{thm:positivepropLivshits2}, it suffices to know
\[
\limsup_{n \rightarrow \infty}  \frac{\sum_{\gamma \in Q(n)} \exp \left( \ell_\psi(\gamma) \right)}{\sum_{\gamma \in P(n)} \exp \left( \ell_\psi(\gamma) \right)} = \limsup_{n \rightarrow \infty}  \frac{\sum_{\gamma \in Q_*(n)} \exp \left( \ell_\psi(\gamma) \right)}{\sum_{\gamma \in P_*(n)} \exp \left( \ell_\psi(\gamma) \right)},
\]
along with the corresponding statements for the flow case using either $Q(T,\Delta)$ and $P(T,\Delta)$ or $Q(0,T)$ and $P(0,T)$.
These statements are clear from the following lemmas, whose proofs follow from \cite[pp.\ 115-116]{PP}.
\begin{lemma}\label{lem:primediscrete}
     Assume we are in the diffeomorphism case. Then for any H\"{o}lder $\psi : M \to \R$, we have
     \[\limsup_{n\to\infty}\frac{1}{n}\log\left(\sum_{\gamma \in P(n)\setminus P_*(n)} \exp(\ell_\psi(\gamma))\right) < \PP(\psi). \]
\end{lemma}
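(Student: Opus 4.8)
The plan is to bound the non-prime part of the pressure sum by organizing orbits according to their underlying prime orbit and the multiplicity. A non-prime orbit $\gamma$ of length $n$ is of the form $\gamma = (\gamma')^m$ for a unique prime orbit $\gamma'$ and some integer $m \geq 2$ with $m \mid n$ and $\ell(\gamma') = n/m$. Under this decomposition we have $\ell_\psi(\gamma) = m\,\ell_\psi(\gamma')$ when the base point is chosen compatibly (and $\ell_\psi$ depends only on the orbit), so
\[
\sum_{\gamma \in P(n)\setminus P_*(n)} \exp(\ell_\psi(\gamma)) = \sum_{\substack{m \geq 2 \\ m \mid n}} \sum_{\gamma' \in P_*(n/m)} \exp\!\left(m\,\ell_\psi(\gamma')\right).
\]
First I would handle the $m=2$ term, which is the dominant one: $\sum_{\gamma' \in P_*(n/2)} \exp(2\ell_\psi(\gamma'))$. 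Setting $\psi' \coloneqq 2\psi$, this is at most $\sum_{\gamma' \in P(n/2)} \exp(\ell_{\psi'}(\gamma'))$, which by \eqref{eqn:pressure} grows like $\exp\!\left(\tfrac{n}{2}\PP(2\psi) + o(n)\right)$. The key arithmetic input is the standard pressure inequality $\PP(2\psi) < 2\PP(\psi)$, which holds because equilibrium states have \emph{positive entropy} (as recorded in the preliminaries via \cite[Lemma 2]{parry}): indeed, writing $\mu = \mu_{2\psi}$ for the equilibrium state of $2\psi$, we get $\PP(2\psi) = h(\mu) + 2\mu(\psi) = 2(h(\mu) + \mu(\psi)) - h(\mu) \leq 2\PP(\psi) - h(\mu) < 2\PP(\psi)$. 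Hence $\tfrac{1}{n}\log$ of the $m=2$ term is at most $\tfrac{1}{2}\PP(2\psi) + o(1) < \PP(\psi) - \delta$ for some fixed $\delta > 0$ and all large $n$.

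Next I would control the tail $\sum_{m \geq 3,\, m \mid n}$, which should be negligible. For each such $m$ the inner sum is bounded by $\exp\!\left(\tfrac{n}{m}\PP(m\psi) + o(n)\right)$, and since $\PP(m\psi) \leq m\,\PP(\psi) + C$ for a constant $C$ depending only on $\|\psi\|_\infty$ (from $\mu(m\psi) \leq m\|\psi\|_\infty$ and $\mu(m\psi) \geq -m\|\psi\|_\infty$ relative to $\PP(\psi) \geq h(\mu) - \|\psi\|_\infty$), each term of the tail is at most $\exp\!\left(n\PP(\psi) + nC/3 + o(n)\right)$ — wait, this crude bound is too lossy, so instead I would use the sharper estimate: a non-prime orbit of length $n$ with multiplicity $m \geq 2$ comes from a prime orbit of length $n/m \leq n/2$, and there are at most $\exp(\tfrac{n}{m}\PP(\psi/\cdot))$ — more simply, since the total number of closed orbits of length at most $n/2$ grows like $\exp(\tfrac{n}{2}h_{\mathrm{top}})$, and $\exp(m\ell_\psi(\gamma')) \leq \exp(m \tfrac{n}{m}\|\psi\|_\infty) = \exp(n\|\psi\|_\infty)$, the full tail is crudely at most $d(n)\exp(\tfrac{n}{2}h_{\mathrm{top}} + n\|\psi\|_\infty)$ where $d(n)$ is the number of divisors; this is not obviously below $\exp(n\PP(\psi))$. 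The clean fix is to not separate $m=2$ from $m \geq 3$ at all: for \emph{every} $m \geq 2$, $\sum_{\gamma'\in P_*(n/m)}\exp(m\ell_\psi(\gamma')) \leq \sum_{\gamma'\in P(n/m)}\exp(\ell_{m\psi}(\gamma')) \leq \exp(\tfrac{n}{m}\PP(m\psi)+o(n))$, and the function $m \mapsto \tfrac{1}{m}\PP(m\psi)$ is bounded above for $m \geq 2$ and, by the positive-entropy argument above applied to $m\psi$, strictly less than $\PP(\psi)$: $\tfrac1m\PP(m\psi) = \PP(\psi) + \tfrac1m(\PP(m\psi) - m\PP(\psi)) \leq \PP(\psi) - \tfrac1m h(\mu_{m\psi}) \leq \PP(\psi) - \tfrac1m h_0$ where $h_0 > 0$ is a uniform lower bound for the entropy of equilibrium states of the family $\{m\psi : m \geq 2\}$ (such a uniform bound is again available from \cite[Lemma 2]{parry}, which gives entropy bounds in terms of expansion constants independent of the potential). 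Summing over the at most $\log_2 n$ admissible values of $m$ introduces only a polynomial factor, so
\[
\limsup_{n\to\infty}\frac1n\log\!\left(\sum_{\gamma\in P(n)\setminus P_*(n)}\exp(\ell_\psi(\gamma))\right) \leq \PP(\psi) - \tfrac{h_0}{2} \cdot \tfrac12 < \PP(\psi),
\]
taking $m=2$ as the worst case.

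The main obstacle is precisely securing the strict gap uniformly in $m$: one must know not just that each $\PP(m\psi) < m\PP(\psi)$ but that the deficit does not degenerate as $m\to\infty$. Since only $m \leq \log_2 n$ matters for a given $n$ and the $m=2$ term dominates anyway, it actually suffices to get the strict inequality for each fixed $m$ separately — the $\limsup$ is then the max over a countable family of values each strictly below $\PP(\psi)$, and for the $m=2$ term (which governs the exponential rate) the gap $\PP(\psi) - \tfrac12\PP(2\psi) > 0$ is a single fixed positive number. I would therefore structure the final argument around isolating $m=2$ as the leading term, bounding all $m \geq 3$ terms by the same $\exp(\tfrac{n}{2}\PP(2\psi))$-type bound (since $\tfrac1m\PP(m\psi) \leq \tfrac12\PP(2\psi)$ for $m \geq 2$ by convexity of $t \mapsto \PP(t\psi)$ together with $\PP(0) = h_{\mathrm{top}} > 0$, which forces $\tfrac1t\PP(t\psi)$ to be decreasing for $t > 0$ — this is the cleanest route and avoids uniform-entropy subtleties entirely), and concluding $\limsup_n \tfrac1n\log(\cdots) \leq \tfrac12\PP(2\psi) < \PP(\psi)$. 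The flow case (needed for the analogue with $P(T,\Delta)$) runs identically, using \eqref{eqn:pressure} in its continuous-time form and the same convexity-plus-positive-topological-entropy input.
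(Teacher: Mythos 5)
Your decomposition of the non-prime sum over the multiplicity $m\geq 2$ and the underlying prime orbit is the standard route, and it is in substance the argument the paper outsources to \cite[pp.\ 115--116]{PP}; the key input you identify --- that positive entropy of equilibrium states forces $\PP(m\psi)<m\PP(\psi)$ --- is exactly the paper's Lemma \ref{lem:pressurebound}. So the overall strategy is right. Two points need repair, though. First, your ``cleanest route'' at the end is not actually valid as stated: convexity of $t\mapsto \PP(t\psi)$ together with $\PP(0)=h_{\mathrm{top}}>0$ does \emph{not} imply that $\PP(t\psi)/t$ is decreasing (for an abstract convex $F$ with $F(0)>0$, e.g.\ $F(t)=t^2+1$, the quotient $F(t)/t$ is eventually increasing; convexity only gives that $(F(t)-F(0))/t$ is increasing, which points the wrong way). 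The monotonicity you want is true, but for the pressure-specific reason
\[
\frac{d}{dt}\left(\frac{\PP(t\psi)}{t}\right)=\frac{t\,\mu_{t\psi}(\psi)-\PP(t\psi)}{t^{2}}=-\frac{h(\mu_{t\psi})}{t^{2}}<0,
\]
i.e.\ precisely the positive-entropy derivative computation you already carried out for $m=2$ (and which is verbatim the proof of Lemma \ref{lem:pressurebound}). So drop the convexity framing and quote that computation instead.

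Second, there is a uniformity-in-$m$ gap you gloss over: the bound $\sum_{\gamma'\in P(n/m)}\exp(\ell_{m\psi}(\gamma'))\leq\exp\bigl(\tfrac{n}{m}\PP(m\psi)+o(n)\bigr)$ comes from \eqref{eqn:pressure} applied to the potential $m\psi$, and the $o(n)$ there a priori depends on $m$, while $m$ ranges up to $\log_2 n$ as $n\to\infty$. The clean fix is to reduce everything to the single fixed potential $\psi$ using Corollary \ref{cor:strictbound}: writing $m\ell_\psi(\gamma')=\ell_\psi(\gamma')+(m-1)\ell_\psi(\gamma')$ and using $\ell_\psi(\gamma')\leq(\PP(\psi)-\epsilon)\ell(\gamma')=(\PP(\psi)-\epsilon)n/m$, one gets
\[
\sum_{\gamma'\in P_*(n/m)}e^{m\ell_\psi(\gamma')}\leq e^{(1-\frac{1}{m})(\PP(\psi)-\epsilon)n}\sum_{\gamma'\in P(n/m)}e^{\ell_\psi(\gamma')}\leq C\,e^{n\PP(\psi)-\frac{\epsilon}{2}n}
\]
with $C$ depending only on $\psi$, uniformly over $2\leq m\leq n$. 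Summing over the at most $\log_2 n$ divisors then gives the strict gap. With these two adjustments your argument is complete and agrees with the cited one.
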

\begin{lemma}\label{lem:prime}
    Assume we are in the flow case. Then for any H\"{o}lder $\psi : M \to \R$, we have
    \[
    \limsup_{T\to\infty}\frac{1}{T}\log\left(\sum_{\gamma \in P(T,\Delta)\setminus P_*(T,\Delta)} \exp(\ell_\psi(\gamma))\right) < \PP(\psi). 
    \]
    Furthermore, if $\PP(\psi) > 0$, then
    \[
    \limsup_{T\to\infty}\frac{1}{T}\log\left(\sum_{\gamma\in P(0,T)\setminus P_*(0,T)}\exp(\ell_\psi(\gamma))\right) < \PP(\psi).
    \]
    \end{lemma}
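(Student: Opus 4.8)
The plan is to carry out the standard reduction by writing each closed orbit as a power of a prime orbit, but keeping careful enough track of lengths and weights to see that the gain in exponential growth rate survives the potential $\psi$. Every $\gamma\in P\setminus P_*$ is uniquely of the form $\gamma=(\gamma')^k$ with $\gamma'\in P_*$ and $k\ge 2$, in which case $\ell(\gamma)=k\ell(\gamma')$ and $\ell_\psi(\gamma)=k\ell_\psi(\gamma')$, so
\[
\sum_{\gamma\in P(T,\Delta)\setminus P_*(T,\Delta)}e^{\ell_\psi(\gamma)}=\sum_{k\ge 2}\ \sum_{\substack{\gamma'\in P_*\\ k\ell(\gamma')\in(T,T+\Delta]}}e^{k\ell_\psi(\gamma')}.
\]
I would fix a large integer $K$, to be chosen at the very end, and bound the ranges $2\le k\le K$ and $k>K$ separately: the first consists of the comparatively few prime orbits of length $\approx T/k$, the second of the many very short prime orbits.

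For $2\le k\le K$ the interval $(T/k,(T+\Delta)/k]$ has width at most $\Delta$, so it lies inside $(T/k,T/k+\Delta]$ and the $k$-th inner sum is at most $\sum_{\gamma'\in P(T/k,\Delta)}e^{\ell_{k\psi}(\gamma')}$, whose exponential growth rate in $T$ is $\tfrac1k\PP(k\psi)$ by \eqref{eqn:pressure}. Writing $\PP(k\psi)=h(\mu_{k\psi})+k\,\mu_{k\psi}(\psi)$ for the equilibrium state $\mu_{k\psi}$ of $k\psi$ gives $\tfrac1k\PP(k\psi)=\tfrac1k h(\mu_{k\psi})+\mu_{k\psi}(\psi)<h(\mu_{k\psi})+\mu_{k\psi}(\psi)\le\PP(\psi)$, where the strict inequality uses $k\ge 2$ together with the positivity of the entropy of equilibrium states recorded in Section \ref{sec:preliminaries}, and the last step is the variational principle. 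Since this range is finite, its contribution grows at rate $\max_{2\le k\le K}\tfrac1k\PP(k\psi)<\PP(\psi)$, for any fixed $K$.

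For $k>K$ I would use a crude per-orbit bound. If $\gamma=(\gamma')^k\in P(T,\Delta)$, then $\ell_\psi(\gamma)/\ell(\gamma)$ is the integral of $\psi$ against the $f^t$-invariant probability measure carried by $\gamma'$, hence $\ell_\psi(\gamma)\le m\,\ell(\gamma)\le mT+|m|\Delta$, where $m\coloneqq\sup_{\mu\in\M_f}\mu(\psi)$; and such a $\gamma$ is determined by its prime root $\gamma'$ (of length $\le(T+\Delta)/K$) and by $k$, of which there are only $O(1)$ admissible values per $\gamma'$ since closed orbits of an Anosov flow have length bounded below. As the number of closed orbits of length $\le S$ grows at exponential rate $\PP(0)$ in $S$ by \eqref{eqn:pressure}, the $k>K$ part grows at rate at most $\PP(0)/K+m$. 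It then remains to choose $K$, and the crux — the one step that is not bookkeeping, and precisely what removes the need to control all of the twisted potentials $k\psi$ uniformly — is the strict inequality $\PP(\psi)>m$. This holds because $\PP(s\psi)=\sup_\mu(h(\mu)+s\mu(\psi))\ge sm$ for every $s\ge 0$, while $s\mapsto\PP(s\psi)$ is differentiable with $\tfrac{d}{ds}\PP(s\psi)=\mu_{s\psi}(\psi)$ by \eqref{eqn:pressuretaylor}: were $\PP(\psi)=m$, then $s=1$ would be an interior minimum of the nonnegative function $s\mapsto\PP(s\psi)-sm$, forcing $\mu_\psi(\psi)=m$ and hence $\PP(\psi)=h(\mu_\psi)+m$, contradicting $h(\mu_\psi)>0$. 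Choosing $K$ with $\PP(0)/K<\PP(\psi)-m$ then makes both ranges grow at rate strictly below $\PP(\psi)$, which is the first assertion.

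The statement with $P(0,T)$ in place of $P(T,\Delta)$ is proved the same way, using the $P(0,S)$ form of \eqref{eqn:pressure}; here the hypothesis $\PP(\psi)>0$ is what both licenses that form of \eqref{eqn:pressure} and absorbs the terms in which $\PP(k\psi)<0$ or $m\le 0$, for which the relevant rates become $\max(\PP(k\psi),0)/k$ and $\PP(0)/K+\max(m,0)$, still strictly below $\PP(\psi)$. Finally, Lemma \ref{lem:primediscrete} follows from the identical dichotomy carried out with integer periods — so $k$ divides $n$ and no windows appear — and is strictly simpler; no positivity of pressure is needed there, since the crude range always contributes rate $\le\PP(0)/K+m<\PP(\psi)$.
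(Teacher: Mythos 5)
Your argument is correct and is essentially the standard one: the paper gives no proof of this lemma, deferring to Parry--Pollicott pp.~115--116, where the same decomposition of non-prime orbits by their multiplicity $k$ appears, with the bounded-$k$ range controlled by $\PP(k\psi)<k\PP(\psi)$ (the paper's Lemma \ref{lem:pressurebound}) and the large-$k$ tail by $\sup_{\mu}\mu(\psi)<\PP(\psi)$ together with crude orbit counting. The only remark worth making is that your derivation of $\PP(\psi)>\sup_\mu\mu(\psi)$ via an interior minimum of $s\mapsto\PP(s\psi)-sm$ can be shortened: for any $t>1$ and any $\mu\in\M_f$ one has $\mu(\psi)\le\bigl(h(\mu)+t\mu(\psi)\bigr)/t\le\PP(t\psi)/t<\PP(\psi)$, exactly as in the proof of Corollary \ref{cor:strictbound}.
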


\subsection{Proof of Theorems \ref{thm:positivepropLivshits1} and \ref{thm:positivepropLivshits2}}
Both of these results follow from central limit theorems describing the asymptotic distribution of the numbers $\ell_\phi(\gamma)$ as $\ell(\gamma) \to \infty$. 
The main point is that, because $\sigma_{\phi,\psi}$ is the standard deviation of the limiting distribution in these theorems, there would be too much mass concentrated in one place for $\sigma_{\phi,\psi}$ to be nonzero if the positive proportion hypothesis is true. 
This implies that $\phi$ is cohomologous to a constant, which then has to be zero. We remark that, unlike the theorem of Ratner \cite{ratner2}, the fact that $\sigma_{\phi,\psi}$ is the correct standard deviation in the following theorems is nontrivial by itself and does not follow from direct computation.

\begin{theorem}[{\cite[Theorem 5]{coelho1990central}}] \label{thm:discreteCLT}
    Let $f : M \rightarrow M$ be a transitive Anosov diffeomorphism and let $\phi, \psi : M \rightarrow \R$ be H\"{o}lder continuous. Suppose that $\sigma_{\phi,\psi}^2 > 0$ and $\mu_\psi(\phi) = 0$. Let $\mu_{n,\psi}$ be the probability measure on $P(n)$ given by
    \[ 
    \mu_{n, \psi} \coloneqq \frac{\sum_{P(n)} \exp(\ell_\psi(\gamma)) \delta_\gamma}{\sum_{P(n)} \exp(\ell_\psi(\gamma))}.\]
    Then $\ell_\phi/\sqrt{n}$, considered as a random variable on $P(n)$, converges in distribution to a normal distribution with mean zero and standard deviation $\sigma_{\phi, \psi}$ as $n \to \infty$. 
\end{theorem}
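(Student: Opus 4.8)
The plan is to show that, for each $t \in \R$, the characteristic functions $\EE_{\mu_{n,\psi}}\!\left[\exp\!\left((it/\sqrt n)\,\ell_\phi\right)\right]$ converge to $\exp\!\left(-\tfrac12\sigma_{\phi,\psi}^2 t^2\right)$, and then to conclude by L\'evy's continuity theorem. First I would pass to symbolic dynamics: fixing a Markov partition for $f$ and replacing $\phi,\psi$ by cohomologous functions depending only on non-negative coordinates (which changes neither the periods $\ell_\phi(\gamma),\ell_\psi(\gamma)$ nor any pressure), the system is modeled by a topologically mixing one-sided subshift of finite type $(\Sigma,\sigma)$ carrying, for each H\"older $g$, a Ruelle transfer operator $\L_g h(x) = \sum_{\sigma y = x} e^{g(y)} h(y)$ on a space of H\"older functions; here a prime closed orbit of length $n$ corresponds to a $\sigma$-orbit of $n$ points of prime period $n$, up to the standard finite ambiguity on partition boundaries, which does not affect exponential asymptotics. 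For real $g$ this operator has a simple leading eigenvalue $e^{\PP(g)}$ separated from the rest of the spectrum by a gap, and the Ruelle trace formula gives $\sum_{\sigma^n x = x} e^{S_n g(x)} = \operatorname{tr}(\L_g^n) = e^{n\PP(g)}\,(1 + O(\theta^n))$ for some $\theta \in (0,1)$. Since $s \mapsto \L_{\psi + s\phi}$ is an entire family in the operator norm, analytic perturbation theory provides a complex neighbourhood $U$ of $0$ on which the leading eigenvalue $\lambda(s)$ and its rank-one spectral projection persist and depend analytically on $s$, with a spectral gap uniform over $U$; thus $\sum_{\sigma^n x = x} e^{S_n(\psi + s\phi)(x)} = \lambda(s)^n\,(1 + O(\theta^n))$ uniformly for $s \in U$, while $\log\lambda(s) = \PP(\psi + s\phi)$ on $U$ (by \eqref{eqn:pressure} for real $s$, then by analytic continuation), so the Taylor expansion \eqref{eqn:pressuretaylor} is valid on $U$ with an $O(|s|^3)$ remainder.

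Next I would relate the periodic-orbit sums defining $\mu_{n,\psi}$ to these traces. Grouping the points with $\sigma^n x = x$ by their prime period and bounding the contribution of points of prime period $d < n$ by $O\!\left(e^{n(\PP(\psi) - \epsilon) + O(|s| n)}\right)$ — using Lemma \ref{lem:primediscrete} and the crude estimate $|\ell_{s\phi}(\gamma)| \le |s|\,\|\phi\|_\infty\,\ell(\gamma)$ — one gets, uniformly for $s = O(n^{-1/2})$,
\[
n \sum_{\gamma \in P(n)} \exp\!\big(\ell_\psi(\gamma) + s\,\ell_\phi(\gamma)\big) \;=\; \lambda(s)^n + o\!\big(e^{n\PP(\psi)}\big),
\]
where Lemma \ref{lem:primediscrete} is used once more to pass between $P(n)$ and $P_*(n)$. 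Dividing the $s = it/\sqrt n$ instance by the $s = 0$ instance, the factor $n$ cancels and one obtains
\[
\EE_{\mu_{n,\psi}}\!\left[\exp\!\left(\tfrac{it}{\sqrt n}\,\ell_\phi\right)\right] \;=\; \frac{\lambda(it/\sqrt n)^n + o\!\big(e^{n\PP(\psi)}\big)}{e^{n\PP(\psi)}\,(1 + o(1))}.
\]
Using analyticity of $\log\lambda$ on $U$, the expansion \eqref{eqn:pressuretaylor}, and the hypothesis $\mu_\psi(\phi) = 0$,
\[
n\log\lambda\!\left(\tfrac{it}{\sqrt n}\right) \;=\; n\PP(\psi) + \tfrac{\sigma_{\phi,\psi}^2}{2}\, n\!\left(\tfrac{it}{\sqrt n}\right)^{\!2} + n \cdot O(n^{-3/2}) \;=\; n\PP(\psi) - \tfrac{\sigma_{\phi,\psi}^2}{2}\,t^2 + O(n^{-1/2}),
\]
so $\lambda(it/\sqrt n)^n / e^{n\PP(\psi)} \to e^{-\sigma_{\phi,\psi}^2 t^2/2}$; since this limit is bounded, the error terms are harmless, and the characteristic functions converge to $e^{-\sigma_{\phi,\psi}^2 t^2/2}$. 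As $\sigma_{\phi,\psi}^2 > 0$ this is the characteristic function of a nondegenerate $N(0,\sigma_{\phi,\psi}^2)$, and L\'evy's continuity theorem yields the asserted convergence in distribution.

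I expect the main obstacle to be the uniformity of the spectral data in the complex parameter near $0$: one needs the spectral gap of $\L_{\psi + s\phi}$, the resulting trace asymptotics, and the orbit-to-trace comparison above to hold uniformly on a fixed neighbourhood of $0$ (so that they apply to $s = it/\sqrt n$ for all large $n$), and this — together with the bookkeeping of the symbolic coding and of non-prime orbits — is the technical heart of the argument, although it is by now standard in the thermodynamic formalism of Bowen and Parry–Pollicott. By contrast, what is genuinely nonobvious, as flagged in the remark preceding the statement, is that the limiting standard deviation is exactly $\sigma_{\phi,\psi}$: this is not obtained from a moment computation but is precisely the second-order coefficient in the analytic expansion \eqref{eqn:pressuretaylor} of $\PP(\psi + s\phi) = \log\lambda(s)$, which is what makes it appear in the characteristic-function computation.
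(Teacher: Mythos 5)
The paper does not actually prove this theorem; it quotes it as Theorem 5 of Coelho--Parry \cite{coelho1990central}. Your sketch --- code by a subshift of finite type, perturb the transfer operator $\L_{\psi+s\phi}$ analytically in $s$, identify $\log\lambda(s)$ with $\PP(\psi+s\phi)$ so that the quadratic coefficient in \eqref{eqn:pressuretaylor} supplies the variance, and finish with characteristic functions and L\'evy continuity --- is essentially the route taken in that reference, and it is sound. The points you flag as the technical heart are the right ones: the uniform spectral gap and eigenvalue analyticity on a fixed complex neighbourhood of $0$, the finite-to-one discrepancy of the Markov coding (controlled because the boundary subshifts have strictly smaller pressure for $\psi$), and the removal of non-prime orbits via Lemma \ref{lem:primediscrete}. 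One small inaccuracy: $\sum_{\sigma^n x=x}e^{S_ng(x)}$ is not literally $\operatorname{tr}(\L_g^n)$ on a H\"older space (the operator is not trace class there); the asymptotic $\lambda(g)^n(1+O(\theta^n))$ requires the standard periodic-point approximation argument as in \cite{PP}, but this is a known lemma, not a gap. Finally, note the contrast with the paper's only in-house CLT argument, the continuous-time Theorem \ref{thm:CCLT} in Section \ref{sec:CLTs}: there no clean trace identity is available, so your trace-formula step must be replaced by $L$-functions, contour integration and Dolgopyat-type bounds; your discrete-time argument is the simpler analogue those tools are designed to substitute for.
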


\begin{theorem}\label{thm:CCLT}
Let $f^t : M \rightarrow M$ be a transitive Anosov flow whose stable and unstable distributions are not jointly integrable,
let ${\phi, \psi : M \rightarrow \R}$ be H\"{o}lder continuous with ${\PP(\psi) > 0}$, and let $\Delta > 0$. Suppose that $\sigma_{\phi,\psi}^2 > 0$ and $\mu_\psi(\phi) = 0$. Let $\mu_{T,\Delta,\psi}$ be the probability measure on $P(T,\Delta)$ given by
    \[ 
    \mu_{T,\Delta,\psi} \coloneqq \frac{\sum_{P(T,\Delta)} \exp(\ell_\psi(\gamma)) \delta_\gamma}{\sum_{P(T,\Delta)} \exp(\ell_\psi(\gamma))}.  \]
    Then $\ell_\phi/\sqrt{T}$, considered as a random variable on $P(T,\Delta)$, converges in distribution to a normal distribution with mean zero and standard deviation $\sigma_{\phi,\psi}$ as $T\to\infty$. 
\end{theorem}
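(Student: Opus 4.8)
The plan is to prove convergence of the characteristic function of $\ell_\phi/\sqrt{T}$ on $(P(T,\Delta),\mu_{T,\Delta,\psi})$ to $\exp(-\tfrac12\sigma_{\phi,\psi}^2\xi^2)$, after which L\'evy's continuity theorem gives convergence in distribution to the normal law with mean $0$ and standard deviation $\sigma_{\phi,\psi}$. Set $h\coloneqq\PP(\psi)$, and for $\theta$ in a complex neighborhood of $0$ put
\[
N_\theta(T,\Delta)\coloneqq\sum_{\gamma\in P_*(T,\Delta)}\exp\!\big(\ell_\psi(\gamma)+\theta\,\ell_\phi(\gamma)\big)=\sum_{\gamma\in P_*(T,\Delta)}\exp\!\big(\ell_{\psi+\theta\phi}(\gamma)\big).
\]
Restricting to prime orbits is harmless by Lemma~\ref{lem:prime}, so the characteristic function equals $N_{i\xi/\sqrt{T}}(T,\Delta)\big/N_0(T,\Delta)$ up to a factor $1+o(1)$. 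The task is therefore to pin down the asymptotics of complex-weighted closed-orbit sums over the window $(T,T+\Delta]$, and the approach is to run the transfer-operator argument of Cantrell--Sharp \cite{cantrellsharp} with the extra H\"older weight $\psi$ carried along throughout; unlike the discrete statement of Theorem~\ref{thm:discreteCLT}, this genuinely has to be done rather than quoted.

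First I would pass to symbolic dynamics. Choosing a Markov partition realizes $f^t$ (up to the usual boundary identifications and subexponential corrections) as a suspension of a mixing subshift of finite type $(\Sigma,\sigma)$ under a positive H\"older roof $r$, so that prime closed orbits correspond to prime $\sigma$-periodic orbits, $\ell(\gamma)$ to a Birkhoff sum $r^n$, and — integrating $\psi$ and $\phi$ along the flow direction over a fibre, using the standard reduction of \cite{bowen,PP} — the periods $\ell_\psi(\gamma),\ell_\phi(\gamma)$ to Birkhoff sums $w^n,v^n$ of H\"older functions $w,v$ on $\Sigma$. For $(s,\theta)$ near $(h,0)$ one studies the complex Ruelle operators $\mathcal{L}_{w+\theta v-sr}$ on H\"older functions on $\Sigma$; their simple leading eigenvalue $\lambda(s,\theta)$ is jointly analytic with $\partial_s\lambda(h,0)\neq0$ (since $r>0$), so $\lambda(s,\theta)=1$ defines an analytic function $s=s(\theta)$ with $s(0)=h$. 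For real $\theta$ this $s(\theta)$ is exactly $\PP(\psi+\theta\phi)$ by the standard relation between the pressure of a suspension flow and that of its base, so analytic continuation of \eqref{eqn:pressuretaylor} together with $\mu_\psi(\phi)=0$ gives
\[
s(\theta)=h+\tfrac12\,\sigma_{\phi,\psi}^2\,\theta^2+o(\theta^2)
\]
for complex $\theta$ near $0$; in particular $\big(s(i\xi/\sqrt{T})-h\big)T\to-\tfrac12\sigma_{\phi,\psi}^2\xi^2$.

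Next comes the counting step: I would show that, uniformly for $\theta$ in a small complex neighborhood of $0$,
\[
N_\theta(T,\Delta)=\frac{C(\theta,\Delta)}{T}\,e^{s(\theta)T}\,(1+o(1)),
\]
where $C(\cdot,\Delta)$ is analytic near $0$ with $C(0,\Delta)\neq0$ (the factor $1/T$ arising from the approximation $\ell(\gamma)\approx T$ in the window). This is obtained in the usual way: express $N_\theta(T,\Delta)$ through a contour integral of $-\zeta_\theta'/\zeta_\theta$, where $\zeta_\theta(s)=\prod_{\gamma\in P_*}(1-e^{-s\ell(\gamma)+\ell_{\psi+\theta\phi}(\gamma)})^{-1}$, against the kernel $s^{-1}(e^{s(T+\Delta)}-e^{sT})$; move the contour left past the simple pole at $s=s(\theta)$, whose residue produces the main term; and bound the shifted integral using Dolgopyat's estimates on $\|\mathcal{L}_{w+\theta v-sr}\|$ along vertical lines \cite{dolgopyat1998prevalence,pollicottsharp,PP}. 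The hypotheses enter precisely here: non-joint-integrability of $E^s\oplus E^u$ supplies the oscillatory-cancellation (non-lattice) property of $r$ that powers Dolgopyat's bounds and ensures $s(\theta)$ is the unique dominant pole, while $\PP(\psi)>0$ keeps $s(\theta)$ and the contour strictly to the right of $s=0$, so that the Tauberian argument and Lemma~\ref{lem:prime} apply. Plugging in $\theta=i\xi/\sqrt{T}\to0$ then yields
\[
\frac{N_{i\xi/\sqrt{T}}(T,\Delta)}{N_0(T,\Delta)}=\frac{C(i\xi/\sqrt{T},\Delta)}{C(0,\Delta)}\,e^{(s(i\xi/\sqrt{T})-h)T}\,(1+o(1))\;\longrightarrow\;e^{-\sigma_{\phi,\psi}^2\xi^2/2}.
\]

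The main obstacle is this counting step, and more precisely its \emph{uniformity in $\theta$}: one needs Dolgopyat-type resolvent bounds for the entire analytic family $\mathcal{L}_{w+\theta v-sr}$, uniform for $\theta$ in a complex neighborhood of $0$, together with enough control of error terms that they remain $o(1)$ after dividing by $N_0(T,\Delta)$. This is what makes the result more than a computation: as with Theorem~\ref{thm:discreteCLT}, the fact that $\sigma_{\phi,\psi}$ is the correct standard deviation reflects the coupling between $\phi$ and the random number of symbols in an orbit of flow-length $\approx T$, which is invisible to a naive second-moment estimate and only surfaces through the second derivative of $s(\theta)$. The remaining hypothesis $\sigma_{\phi,\psi}^2>0$ is used solely to ensure the limiting Gaussian is nondegenerate; the argument above goes through verbatim when $\sigma_{\phi,\psi}^2=0$, giving convergence to the point mass at $0$.
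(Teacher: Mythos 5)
Your strategy is the paper's strategy (which is Cantrell--Sharp with the weight $\psi$ carried along): reduce to characteristic functions, encode the periods in a weighted zeta/$L$-function, locate the dominant singularity at $s(\theta)=\PP(\psi+\theta\phi)$, use Dolgopyat-type bounds uniform in $\theta$ to shift a contour, feed in the expansion $s(\theta)=\PP(\psi)+\tfrac12\sigma_{\phi,\psi}^2\theta^2+o(\theta^2)$ coming from \eqref{eqn:pressuretaylor} and $\mu_\psi(\phi)=0$, and finish with L\'evy continuity. You also correctly identify where each hypothesis enters.

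The one step that would fail as literally written is the counting step with the kernel $s^{-1}(e^{s(T+\Delta)}-e^{sT})$. The available input (Proposition \ref{prop:dolg}) only gives $|L'/L|=O(|\Im(s)|^{\beta})$ in a region whose width shrinks like $|\Im(s)|^{-\rho}$, so the contour integral of $\eta(s,\theta)$ against a kernel decaying only like $1/|s|$ on vertical lines is not absolutely convergent, and there is no fixed vertical line to shift to. This is precisely why the paper, following Ingham and Pollicott--Sharp, integrates against the smoothed kernel $T^{s+k}/(s(s+1)\cdots(s+k))$ with $k>\beta$ to define the $k$-fold integrated sums $S_{k,t}(T)$, and then recovers $\hat S_{0,t}(T)$ by iterating a finite-difference argument. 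For $t=0$ this uses monotonicity of $\hat S_{k}$ (inequality \eqref{eqn:ineq}); for $t\neq 0$ monotonicity is lost and one must instead expand $\hat S_{k,t}(T+T\epsilon)-\hat S_{k,t}(T)$ and control the extra terms using the $t=0$ asymptotics of Proposition \ref{prop:asymp} --- this un-smoothing is the genuine technical content of Section \ref{sec:CLTs}, beyond the uniformity of the Dolgopyat bounds that you flag. The paper also reaches the window $P_*(T,\Delta)$ indirectly, by first proving asymptotics on $P_*(0,T)$, removing the $\ell(\gamma)$ weight by Stieltjes integration, differencing the two cumulative counts, and only then replacing $\sqrt{\ell(\gamma)}$ by $\sqrt{T}$; these are routine reductions, but they are needed because the direct window contour integral is not available.
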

Theorem \ref{thm:CCLT} was proved for $\psi \equiv 0$ in \cite{cantrellsharp}. For $\psi \not\equiv 0$, the strategy carries through without major modifications as long as $\PP(\psi) > 0$. We will give the details for the weighted case in Section \ref{sec:CLTs}.

\begin{proof}[Proof of Theorem \ref{thm:positivepropLivshits1}]
    Assume the positive proportion hypothesis is true, and assume for contradiction that $\sigma_{\phi,\psi}^2 > 0$. 
    Let $\phi' = \phi-\mu_\psi(\phi)$, and
    let $X_n$ be the random variable on $P(n)$ given by
    \[
    X_n = \frac{\ell_{\phi'}}{\sqrt{n}} = \frac{\ell_{\phi} - n\mu_\psi(\phi)}{\sqrt{n}}.
    \]
    By Theorem \ref{thm:discreteCLT}, we know that $X_n$ converges in distribution to a normal distribution with mean zero and standard deviation $\sigma_{\phi',\psi} = \sigma_{\phi,\psi} \neq 0$. In terms of $X_n$, the positive proportion hypothesis states
    \[\limsup_{n\to\infty}\mu_{n,\psi}(X_n = -\mu_\psi(\phi)\sqrt{n}) > 0.\] If $\mu_\psi(\phi) \neq 0$, then this contradicts tightness of $X_n$. If $\mu_\psi(\phi) = 0$, then we have a contradiction to the fact that the limiting distribution of $X_n$ has no atom at $0$, and therefore
    \[\lim_{\epsilon\to 0}\limsup_{n\to\infty}\mu_{n,\psi}(X_n \in [-\epsilon,\epsilon]) = 0.\]
    Therefore $\sigma^2_{\phi,\psi} = 0$, and so $\phi$ is cohomologous to a constant. Since $\phi$ has a zero period by hypothesis, it must be a coboundary.
\end{proof}

\begin{proof}[Proof of Theorem \ref{thm:positivepropLivshits2}]
Replacing Theorem \ref{thm:discreteCLT} with Theorem \ref{thm:CCLT}, the proof of Theorem \ref{thm:positivepropLivshits2} for $P(T,\Delta)$ and $Q(T,\Delta)$ is identical to the above if $\PP(\psi) > 0$. If $\PP(\psi) \leq 0$, then choose $c$ large enough so that ${\PP(\psi + c) > 0}$, and let $\psi' = \psi+c$. Observe
\[  \frac{\sum_{\gamma \in Q(T,\Delta)} \exp \left( \ell_{\psi'}(\gamma) \right)}{\sum_{\gamma \in P(T,\Delta)} \exp \left( \ell_{\psi'}(\gamma) \right)} \geq e^{-c \Delta} \frac{\sum_{\gamma \in Q(T,\Delta)} \exp \left( \ell_\psi(\gamma) \right)}{\sum_{\gamma \in P(T,\Delta)} \exp \left( \ell_\psi(\gamma) \right)}
,  \]
and therefore if $Q$ has positive proportion with respect to $\psi$, it has positive proportion with respect to $\psi'$. Hence $\phi$ is a coboundary using the same argument for $\psi'$ in place of $\psi$.

As for the last part of Theorem \ref{thm:positivepropLivshits2}, suppose that $\PP(\psi) > 0$ and that
\[
\limsup_{T\to\infty} \frac{\sum_{\gamma \in Q(0,T)}\exp(\ell_\psi(\gamma))}{\sum_{\gamma\in P(0,T)}\exp(\ell_\psi(\gamma))} 
= \epsilon > 0.
\]
For any $\Delta > 0$, we may write
\begin{align*}
    \frac{\sum_{\gamma \in Q(T,\Delta)}\exp(\ell_\psi(\gamma))}{\sum_{\gamma\in P(T,\Delta)}\exp(\ell_\psi(\gamma))}
    & \geq \frac{\sum_{\gamma \in Q(T,\Delta)}\exp(\ell_\psi(\gamma))}{\sum_{\gamma\in P(0,T+\Delta)}\exp(\ell_\psi(\gamma))} \\
    & \geq \frac{\sum_{\gamma \in Q(0,T+\Delta)}\exp(\ell_\psi(\gamma))}{\sum_{\gamma\in P(0,T+\Delta)}\exp(\ell_\psi(\gamma))} - \frac{\sum_{\gamma \in P(0,T)}\exp(\ell_\psi(\gamma))}{\sum_{\gamma\in P(0,T+\Delta)}\exp(\ell_\psi(\gamma))}.
\end{align*}
By Proposition \ref{prop:asymp}, we have
\[
\sum_{\gamma \in P(0,T)}\exp(\ell_\psi(\gamma)) \sim \frac{e^{\PP(\psi)T}}{\PP(\psi)T},
\]
and therefore
\[
\lim_{T\to\infty}\frac{\sum_{\gamma \in P(0,T)}\exp(\ell_\psi(\gamma))}{\sum_{\gamma\in P(0,T+\Delta)}\exp(\ell_\psi(\gamma))} = e^{-\PP(\psi)\Delta}.
\]
Choosing $\Delta$ large enough so that $e^{-\PP(\psi)\Delta} \leq \frac{\epsilon}{2}$, then 
\[
\limsup_{T\to\infty}\frac{\sum_{\gamma \in Q(T,\Delta)}\exp(\ell_\psi(\gamma))}{\sum_{\gamma\in P(T,\Delta)}\exp(\ell_\psi(\gamma))} 
\geq \frac{\epsilon}{2} > 0,
\]
and thus the fact that $\phi$ is a coboundary follows from the first part of the theorem.
\end{proof}

\subsection{Proof of Theorems \ref{thm:positivepropLivshits3} and \ref{thm:positivepropLivshits4}}

We start with the following lemmas.

\begin{lemma}\label{lem:pressurebound}
       For any H\"{o}lder $\psi : M \to \R$ and any $t > 1$, we have $\PP(t\psi) < t\PP(\psi)$.
\end{lemma}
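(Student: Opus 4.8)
The plan is to exploit the variational characterization of pressure together with the strict positivity of entropy for equilibrium states. Recall from \eqref{eq:pressuredef} that $\PP(t\psi) = \sup_{\mu \in \M_f}\{h(\mu) + t\mu(\psi)\}$, and that this supremum is attained at the unique equilibrium state $\mu_{t\psi}$. First I would write, for any $t > 1$,
\[
\PP(t\psi) = h(\mu_{t\psi}) + t\,\mu_{t\psi}(\psi) = t\bigl(h(\mu_{t\psi}) + \mu_{t\psi}(\psi)\bigr) - (t-1)h(\mu_{t\psi}) \leq t\PP(\psi) - (t-1)h(\mu_{t\psi}),
\]
where the last inequality is the variational bound $h(\mu_{t\psi}) + \mu_{t\psi}(\psi) \leq \PP(\psi)$ applied to the measure $\mu_{t\psi}$. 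Since $t - 1 > 0$, it remains only to show $h(\mu_{t\psi}) > 0$, and this is precisely the statement in the Preliminaries that equilibrium states in this setting have positive entropy (deduced from \cite[Lemma 2]{parry}). This gives $\PP(t\psi) \leq t\PP(\psi) - (t-1)h(\mu_{t\psi}) < t\PP(\psi)$, as desired.

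A small point to verify is that $\mu_{t\psi}(\psi)$ is finite, so that the algebraic rearrangement above is legitimate; this is immediate because $\psi$ is H\"older, hence bounded on the compact manifold $M$, so $|\mu_{t\psi}(\psi)| \leq \|\psi\|_\infty < \infty$, and likewise $h(\mu_{t\psi}) \leq \PP(0) < \infty$ since metric entropy is bounded by topological entropy. Thus every quantity appearing is a genuine real number and the manipulation is valid.

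I do not expect any serious obstacle here: the only nontrivial input is the positivity of the entropy of the equilibrium state, which has already been recorded in Section \ref{sec:preliminaries}. The one thing worth stating carefully is that the argument applies uniformly in the diffeomorphism and flow cases, since in both settings $f^t$ is assumed transitive Anosov (weak mixing in the flow case), the equilibrium state $\mu_{t\psi}$ exists and is unique, and its entropy is positive. An alternative, slightly more conceptual phrasing would use strict convexity of the pressure function $t \mapsto \PP(t\psi)$ together with $\PP(0) = h_{\mathrm{top}}(f) > 0$, but the direct variational argument above is shorter and self-contained, so that is the route I would take.
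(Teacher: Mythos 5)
Your proof is correct. It rests on exactly the same two ingredients as the paper's proof --- the variational principle and the fact (recorded in Section \ref{sec:preliminaries}, via \cite[Lemma 2]{parry}) that equilibrium states have positive entropy --- but it assembles them differently. The paper differentiates the pressure function, writing
\[ \left.\frac{d}{ds}\right|_{s=t}\PP(s\psi) = \mu_{t\psi}(\psi) \leq \PP(\psi)-h(\mu_{t\psi}) < \PP(\psi) = \left.\frac{d}{ds}\right|_{s=t}\bigl(s\PP(\psi)\bigr), \]
and integrates this strict inequality on derivatives from $1$ to $t$. You instead work at the single parameter value $t$ and rearrange the variational identity $\PP(t\psi) = h(\mu_{t\psi}) + t\mu_{t\psi}(\psi)$ directly. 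Your route is slightly more elementary: it needs only that the supremum defining $\PP(t\psi)$ is attained and that the maximizer has positive entropy, whereas the paper's version additionally invokes differentiability of $s \mapsto \PP(s\psi)$ and the first-derivative formula $\frac{d}{ds}\PP(s\psi) = \mu_{s\psi}(\psi)$ (standard for H\"older potentials on Anosov systems, and consistent with the expansion \eqref{eqn:pressuretaylor}, but an extra input nonetheless). The derivative formulation has the minor advantage of exhibiting the inequality as a statement about the slope of the pressure function being strictly below $\PP(\psi)$ for all $s>1$, which dovetails with the proof of Corollary \ref{cor:strictbound}; but for the lemma as stated your argument is complete and, if anything, cleaner. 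Your finiteness remarks are correct though not strictly necessary to flag, since all quantities are automatically finite for H\"older potentials on a compact manifold.
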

\begin{proof}
Note that for $t=1$, we have $\PP(t\psi) = t\PP(\psi)$. If $t > 1$, then Equation \eqref{eqn:pressure} yields
\[ \left.\frac{d}{ds}\right|_{s=t}\PP(s\psi) = \mu_{t\psi}(\psi) \leq \PP(\psi)-h(\mu_{t\psi}) < \PP(\psi) = \left.\frac{d}{ds}\right|_{s=t}s\PP(\psi),\]
using the fact that equilibrium states have positive entropy.
Integrating this proves the result.
\end{proof}
\begin{corollary}\label{cor:strictbound}
For any H\"{o}lder $\psi : M \to \R$, there exists $\epsilon > 0$ such that, for any $\gamma \in P$, we have
\[ \ell_\psi(\gamma) \leq (\PP(\psi)-\epsilon)\ell(\gamma)\qedhere.\]
\end{corollary}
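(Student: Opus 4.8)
The plan is to deduce this quickly from Lemma~\ref{lem:pressurebound} together with the a priori bound $\ell_\psi(\gamma) \le \PP(\psi)\,\ell(\gamma)$, which holds for every closed orbit $\gamma \in P$ and every H\"older $\psi$. To see this a priori bound, recall that the orbit measure $\delta_\gamma$ is an $f^t$-invariant Borel probability measure with zero metric entropy with respect to $f^1$, since a measure carried by a single periodic orbit has no entropy. Hence the variational principle \eqref{eq:pressuredef} gives $h(\delta_\gamma) + \delta_\gamma(\psi) = \delta_\gamma(\psi) \le \PP(\psi)$; as $\delta_\gamma(\psi) = \ell_\psi(\gamma)/\ell(\gamma)$ and $\ell(\gamma) > 0$, this rearranges to $\ell_\psi(\gamma) \le \PP(\psi)\,\ell(\gamma)$. (Alternatively, the same inequality follows directly from \eqref{eqn:pressure}, applied along the subsequence of iterates $\gamma^k$ of a fixed orbit, which have periods $\ell_\psi(\gamma^k) = k\,\ell_\psi(\gamma)$ and lengths $k\,\ell(\gamma)$.)

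Now apply this a priori bound to the H\"older function $2\psi$. Since $\ell_{2\psi}(\gamma) = 2\,\ell_\psi(\gamma)$, we obtain $2\,\ell_\psi(\gamma) \le \PP(2\psi)\,\ell(\gamma)$ for every $\gamma \in P$, that is,
\[ \ell_\psi(\gamma) \le \frac{\PP(2\psi)}{2}\,\ell(\gamma). \]
By Lemma~\ref{lem:pressurebound} with $t = 2$ we have $\PP(2\psi) < 2\,\PP(\psi)$, so $\epsilon \coloneqq \PP(\psi) - \frac{1}{2}\PP(2\psi)$ is strictly positive and depends only on $\psi$. This gives $\ell_\psi(\gamma) \le (\PP(\psi) - \epsilon)\,\ell(\gamma)$ for all $\gamma \in P$, which is the claim.

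There is no real obstacle in this argument; the only points to keep in mind are that $\epsilon$ must be chosen uniformly in $\gamma$ — which is automatic once $\psi$ is fixed — and that the reasoning is insensitive to whether $f^t$ is a diffeomorphism or a flow, since in both cases $\ell(\gamma) > 0$ and $\delta_\gamma$ has zero entropy with respect to the time-one map. Any fixed $t > 1$ in place of $2$ works equally well, yielding $\epsilon = \PP(\psi) - \PP(t\psi)/t$.
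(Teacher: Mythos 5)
Your argument is correct and is essentially the paper's own proof: both apply the variational principle to $t\psi$ for a fixed $t>1$ (you take $t=2$), use that the periodic orbit measure $\delta_\gamma$ has zero entropy to get $\ell_\psi(\gamma)/\ell(\gamma) \le \PP(t\psi)/t$, and then invoke Lemma~\ref{lem:pressurebound} to conclude with $\epsilon = \PP(\psi) - \PP(t\psi)/t > 0$. No issues.
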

\begin{proof}
Choose any $t > 1$. By the definition of pressure and the fact that any periodic probability measure $\delta_\gamma$ has zero entropy, for any $\gamma \in P$ we have
\[
\frac{\ell_\psi(\gamma)}{\ell(\gamma)} = \delta_\gamma(\psi) = \frac{h(\delta_\gamma) + \delta_\gamma(t\psi)}{t} \leq \frac{\PP(t\psi)}{t} < \PP(\psi). \qedhere
\]
\end{proof}

\begin{lemma} \label{lem:subexp}
    If $Q'$ grows subexponentially, then for any H\"{o}lder $\psi : M \to \R$, we have $\mu_\psi(\phi) \leq 0$.
\end{lemma}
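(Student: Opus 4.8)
The plan is to express $\mu_\psi(\phi)$ as a limit of averages of the periods $\ell_\phi(\gamma)/\ell(\gamma)$ weighted by $\exp(\ell_\psi(\gamma))$, and then show that the subexponential growth of $Q'$ forces the positive part of these averages to be negligible. Concretely, since $\mu_\psi$ is the equilibrium state for $\psi$ and can be realized as the weak-$*$ limit of the weighted periodic orbit measures $\mu_{n,\psi}$ from Theorem \ref{thm:discreteCLT} (or $\mu_{T,\Delta,\psi}$ in the flow case), we have
\[
\mu_\psi(\phi) = \lim_{n\to\infty} \frac{\sum_{\gamma\in P(n)} \exp(\ell_\psi(\gamma)) \cdot \frac{1}{n}\ell_\phi(\gamma)}{\sum_{\gamma\in P(n)} \exp(\ell_\psi(\gamma))}.
\]
First I would split the numerator according to whether $\gamma\in Q'(n)$ (i.e. $\ell_\phi(\gamma) > 0$) or $\gamma\in P(n)\setminus Q'(n)$ (i.e. $\ell_\phi(\gamma)\le 0$). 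The second part contributes something $\le 0$, so it suffices to show the first part tends to $0$.

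Second, I would bound the contribution from $Q'(n)$. On such orbits, $\ell_\phi(\gamma) \le \ell(\gamma)\|\phi\|_\infty = n\|\phi\|_\infty$, so $\frac{1}{n}\ell_\phi(\gamma) \le \|\phi\|_\infty$, and also $\exp(\ell_\psi(\gamma)) \le \exp(\|\psi\|_\infty n)$ trivially (or one can use Corollary \ref{cor:strictbound} for a sharper bound, $\ell_\psi(\gamma)\le(\PP(\psi)-\epsilon)n$, which is what actually makes the estimate work against the denominator). Therefore
\[
\frac{\sum_{\gamma\in Q'(n)} \exp(\ell_\psi(\gamma)) \cdot \frac{1}{n}\ell_\phi(\gamma)}{\sum_{\gamma\in P(n)} \exp(\ell_\psi(\gamma))} \;\le\; \|\phi\|_\infty \cdot \frac{|Q'(n)|\, e^{(\PP(\psi)-\epsilon)n}}{\sum_{\gamma\in P(n)} \exp(\ell_\psi(\gamma))}.
\]
By Equation \eqref{eqn:pressure}, the denominator grows like $e^{(\PP(\psi)+o(1))n}$, while $|Q'(n)| = e^{o(n)}$ by the subexponential hypothesis; hence the right-hand side is bounded by $\|\phi\|_\infty e^{(-\epsilon + o(1))n} \to 0$. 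Combining with the first step gives $\mu_\psi(\phi) \le 0$. The flow case is identical after replacing $P(n), Q'(n)$ with $P(T,\Delta), Q'(T,\Delta)$ (or $P(0,T), Q'(0,T)$), using the corresponding convergence of $\mu_{T,\Delta,\psi}$ to $\mu_\psi$ and Lemma \ref{lem:prime} / Equation \eqref{eqn:pressure} for the denominator asymptotics.

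The main obstacle I anticipate is justifying the representation of $\mu_\psi(\phi)$ as the limit of the weighted orbital averages of $\frac{1}{n}\ell_\phi(\gamma)$ — this is slightly more than the bare weak-$*$ convergence $\mu_{n,\psi}\to\mu_\psi$, since $\frac{1}{n}\ell_\phi(\gamma) = \delta_\gamma(\phi)$ is exactly $\mu_{n,\psi}$ tested against the (bounded, continuous) function $\phi$ only after one notes $\ell_\phi(\gamma) = \ell(\gamma)\,\delta_\gamma(\phi)$; so in fact the identity $\frac{\sum \exp(\ell_\psi(\gamma))\frac1n\ell_\phi(\gamma)}{\sum\exp(\ell_\psi(\gamma))} = \mu_{n,\psi}(\phi) \to \mu_\psi(\phi)$ is immediate and there is no real obstacle. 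The only genuine care needed is to make sure the Corollary \ref{cor:strictbound} bound (with its uniform $\epsilon>0$) is used for the weights on $Q'$, rather than the naive $\|\psi\|_\infty$ bound, since the latter need not beat $\PP(\psi)$ in the exponent.
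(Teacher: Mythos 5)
Your proposal is correct and follows essentially the same route as the paper's proof: both represent $\mu_\psi(\phi)$ as the limit of the $\psi$-weighted orbital averages $\mu_{n,\psi}(\phi)$, discard the nonpositive contribution from $P(n)\setminus Q'(n)$, and kill the $Q'(n)$ contribution by combining the uniform bound $\ell_\psi(\gamma)\le(\PP(\psi)-\epsilon)\ell(\gamma)$ of Corollary \ref{cor:strictbound} with the subexponential growth of $|Q'(n)|$ against the pressure asymptotic \eqref{eqn:pressure} for the denominator. Your closing remark correctly identifies the one point where care is needed (using Corollary \ref{cor:strictbound} rather than the naive $\|\psi\|_\infty$ bound), which is exactly how the paper proceeds.
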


\begin{proof}
We will show this in the diffeomorphism case, but the same argument carries over to the flow case. We have
\[
\mu_\psi(\phi) = \lim_{n\to\infty}\frac{\sum_{\gamma \in P(n)}\exp(\ell_{\psi}(\gamma))\delta_\gamma(\phi)}{\sum_{\gamma \in P(n)}\exp(\ell_{\psi}(\gamma))} \leq \lim_{n\to\infty}|\phi|_\infty\frac{\sum_{\gamma\in Q'(n)}\exp(\ell_{\psi}(\gamma))}{\sum_{\gamma \in P(n)}\exp(\ell_{\psi}(\gamma))}.
\]
Using Corollary \ref{cor:strictbound}, we can write
\[
\sum_{\gamma \in Q'(n)}\exp(\ell_\psi(\gamma)) \leq \exp((\PP(\psi)-\epsilon)n)|Q'(n)|.
\]
Since $|Q'(n)|$ 
grows subexponentially, we have
\[
\limsup_{n\to\infty}\frac{1}{n}\log \left(\sum_{\gamma \in Q'(n)}\exp(\ell_{\psi}(\gamma))\right) \leq \PP(\psi)-\epsilon < \PP(\psi) =
\lim_{n\to\infty}\frac{1}{n}\log\left( \sum_{\gamma \in P(n)}\exp(\ell_{\psi}(\gamma))\right).
\]
Therefore 
\[\lim_{n\to\infty}\frac{\sum_{\gamma\in Q'(n)}\exp(\ell_\psi(\gamma))}{\sum_{\gamma\in P(n)}\exp(\ell_\psi(\gamma))} = 0. \qedhere\]
\end{proof}
It is now straightforward to deduce that every period of $\phi$ must be nonpositive by using the well-known fact from ergodic optimization that the zero temperature limit maximizes the integral of $\phi$ among invariant probability measures.

\begin{proof}[Proofs of Theorem \ref{thm:positivepropLivshits3} and \ref{thm:positivepropLivshits4}]
    Let $\gamma \in P$ be arbitrary, and let $\delta_\gamma$ denote the $f$-invariant probability measure supported on $\gamma$. For $s > 0$, let $\mu_s$ be the equilibrium state of $s\phi$.
    For every $s$ we have
        \[ s\delta_\gamma(\phi) = h(\delta_\gamma)+\delta_\gamma(s\phi) \leq P(s\phi) = h(\mu_s) + s\mu_{s}( \phi).\]
        Therefore using Lemma \ref{lem:subexp} and the fact that all metric entropies are bounded above, 
        \[ \ell_\phi(\gamma) = \ell(\gamma)\delta_\gamma(\phi)  \leq \ell(\gamma)\limsup_{s \rightarrow \infty} \left(\frac{h(\mu_s)}{s} + \mu_{s}(\phi)\right) \leq 0.\]
        Thus every period of $\phi$ is nonpositive, and the fact that $\phi$ is cohomologous to a nonpositive function follows from the nonpositive Livshits theorem \cite{lopes1, lopes2}. 
\end{proof}

\section{Proof of Theorem \ref{thm:CCLT}}\label{sec:CLTs}

    Throughout, we assume that $\PP(\psi) > 0$, $\sigma_{\phi,\psi}^2 > 0$, and that the stable and unstable distributions of $f^t$ are not jointly integrable.
    Define the $\psi$-weighted dynamical $L$-function by
    \[ L(s,t) \coloneqq \prod_{\gamma \in P_*}\left(1-e^{\ell_\psi(\gamma)-s\ell(\gamma)+it\ell_\phi(\gamma)}\right)^{-1} = \exp \left\{ \sum_{\gamma \in P_*} \sum_{n=1}^\infty \frac{1}{n} e^{n\ell_\psi(\gamma) - sn\ell(\gamma) + itn \ell_\phi(\gamma)} \right\}.\]
    Let $L'(s,t)$ denote the derivative of $L(s,t)$ with respect to $s$, and for small enough $t$ let $s(t) \coloneqq \PP(\psi+it\phi)$. The following result was shown in the unweighted case in \cite[Section 4]{pollicottsharp}, and with some modifications to the calculations, the proof is the same in the weighted case. See also \cite[Section 6]{Petkov} for a similar weighted generalization in the case where $f^t$ is exponentially mixing.
        
    \begin{proposition} \label{prop:dolg}
        There exist $C, \rho, \delta > 0$ such that for any $t \in (-\delta, \delta)$,
        \[ \frac{L'(s,t)}{L(s,t)} + \frac{1}{s-s(t)}\]
        is analytic in $\Re(s) > \PP(\psi) - C \min\{1, |\Im(s)|^{-\rho}\}$. Furthermore, there exists $\beta > 0$, independent of $t \in (-\delta, \delta)$, such that for $\Re(s) > \PP(\psi) - C \min\{1, |\Im(s)|^{-\rho}\}$ we have
        \[ \left|\frac{L'(s,t)}{L(s,t)}\right| = O(\max\{|\Im(s)|^\beta, 1\}).\]
    \end{proposition}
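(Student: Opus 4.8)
The plan is to adapt the argument of \cite[Section~4]{pollicottsharp}, which treats the case $\psi \equiv 0$ (with topological entropy in place of $\PP(\psi)$), keeping track of how the extra real Hölder weight $\psi$ enters the transfer-operator calculus. First I would fix a symbolic model for the flow: following the standard Markov coding of Anosov flows (Bowen, Ratner), realize $f^t$ up to a finite-to-one semiconjugacy as a suspension flow over a topologically mixing subshift of finite type $(\Sigma,\sigma)$ with strictly positive Hölder roof function $r$ that is not cohomologous to a locally constant function. Lifting $\psi$ and $\phi$ to the Hölder functions $\hat\psi,\hat\phi$ on $\Sigma$ obtained by integrating $\psi,\phi$ along the flow over one return, the prime periodic $\sigma$-orbits correspond to the prime closed orbits of $f^t$, and $\ell(\gamma)$, $\ell_\psi(\gamma)$, $\ell_\phi(\gamma)$ become the Birkhoff sums of $r$, $\hat\psi$, $\hat\phi$. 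Up to the orbits missed by the coding and the non-prime orbits, whose contributions have strictly smaller exponential rate by Lemma~\ref{lem:prime} and so contribute only analytic, non-vanishing factors in a half-plane $\Re(s) > \PP(\psi) - \epsilon_0$, the function $L(s,t)$ agrees with the symbolic zeta function $\prod_{\tau}\bigl(1 - e^{(\hat\psi - sr + it\hat\phi)^{|\tau|}(z_\tau)}\bigr)^{-1}$, whose abscissa of convergence is $\PP(\psi)$.

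Second, I would introduce the family of weighted Ruelle transfer operators $\L_{s,t}$ on Hölder functions on $\Sigma$, $(\L_{s,t}w)(x) = \sum_{\sigma y = x} e^{\hat\psi(y) - s r(y) + it\hat\phi(y)} w(y)$, and use the trace identity $\log L(s,t) = \sum_{n\ge 1}\tfrac1n \sum_{\sigma^n z = z} e^{(\hat\psi - sr + it\hat\phi)^n(z)} + (\text{analytic correction})$. For $t \in (-\delta,\delta)$ and $s$ near $\PP(\psi)$, $\L_{s,t}$ has a simple isolated leading eigenvalue $\lambda(s,t)$, analytic in $(s,t)$, and by Abramov's formula $\lambda(s,t) = 1$ exactly along the curve $s = s(t) := \PP(\psi + it\phi)$, which is close to $\PP(\psi)$ for $|t|$ small; moreover $\partial_s\lambda(s(t),t) \neq 0$. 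Inserting the spectral-gap decomposition $\L_{s,t}^n = \lambda(s,t)^n\Pi_{s,t} + R_{s,t}^n$, with $R_{s,t}$ of strictly smaller spectral radius, into the trace identity gives $\log L(s,t) = -\log(1 - \lambda(s,t)) + (\text{analytic})$ on a fixed vertical strip $|\Im(s)| \le R_0$; hence $L$ has there a simple pole at $s(t)$ and $\tfrac{L'(s,t)}{L(s,t)} = -\tfrac{1}{s - s(t)} + (\text{analytic near } s(t))$, which is the proposition in the strip.

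Third --- the crux --- I would extend the continuation to the full region $\Re(s) > \PP(\psi) - C\min\{1,|\Im(s)|^{-\rho}\}$ and prove the polynomial bound by invoking Dolgopyat's estimates on $\L_{s,t}$ for $|\Im(s)|$ large. The hypothesis that $E^s$ and $E^u$ are not jointly integrable supplies the uniform non-integrability of the roof function $r$ that drives Dolgopyat's oscillatory cancellation. Normalizing $\L_{s,t}$ by the leading eigendata of $\L_{\PP(\psi),0}$ and running the argument produces constants $C,\rho > 0$ such that, in a suitable $|\Im(s)|$-weighted Hölder norm, $\|\L_{s,t}^{\,n}\| < \theta_0^{\,n}$ with $\theta_0 < 1$ once $n \gtrsim \log|\Im(s)|$, uniformly for $t \in (-\delta,\delta)$ and $s$ in the region. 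This makes $(I - \L_{s,t})^{-1}$ analytic there with operator norm $O(|\Im(s)|^{\beta})$ for some $\beta > 0$ independent of $t$; re-expressing $\tfrac{L'(s,t)}{L(s,t)}$ through this resolvent via the trace identity yields both the analytic continuation of $\tfrac{L'(s,t)}{L(s,t)} + \tfrac{1}{s - s(t)}$ and the bound $O(\max\{|\Im(s)|^{\beta},1\})$.

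The main obstacle is exactly this last step: one must check that every constant produced by Dolgopyat's machine ($C$, $\rho$, $\delta$, $\beta$, the contraction rate $\theta_0$, and the parameters of the good norm) can be taken uniform in $t$ and is left essentially unchanged by the extra real weight $\hat\psi$ in the exponent. One expects this because Dolgopyat's cancellation exploits only the imaginary part $-\Im(s)\,r + t\hat\phi$ of the exponent, so adding $\hat\psi$ merely modulates amplitudes in a uniformly controlled way; concretely the work is to carry $\hat\psi$ through the normalization --- replacing the Bowen--Margulis eigendata used in the entropy case by the equilibrium eigendata of $\hat\psi$ --- and to re-derive the basic a priori inequalities for $\L_{s,t}$, which is the "modification to the calculations" alluded to above and is also carried out in \cite[Section~6]{Petkov} under an exponential-mixing hypothesis. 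The standing assumption $\PP(\psi) > 0$ is used to ensure that $s(t)$ and the entire continuation region lie where these estimates, and the Euler product, are meaningful.
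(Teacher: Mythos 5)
Your proposal is correct and follows essentially the same route as the paper, which does not prove Proposition \ref{prop:dolg} in detail but defers to \cite[Section 4]{pollicottsharp} (and \cite[Section 6]{Petkov} for the weighted setting), asserting exactly the point you identify: the real weight $\hat\psi$ only modulates amplitudes and passes through Dolgopyat's cancellation argument after renormalizing by the equilibrium eigendata of $\psi$, with all constants uniform in $t \in (-\delta,\delta)$.
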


    Let 
    \[ \eta(s,t) \coloneqq -\frac{L'(s,t)}{L(s,t)} = \sum_{n=1}^\infty \sum_{\gamma \in P_*} \ell(\gamma) e^{n\ell_\psi(\gamma)-s n\ell(\gamma) + itn\ell_\phi(\gamma) }.\]
    The arguments in \cite[Section 3]{pollicottsharp} can be generalized as follows.
    
    \begin{proposition} \label{prop:asymp}
        There exists $\lambda > 0$ such that 
        \begin{equation*} \label{eqn:asymp1}
        \sum_{\gamma \in P_*(0,T)} e^{\ell_\psi(\gamma)} = \frac{e^{\PP(\psi)T}}{\PP(\psi)T} \left(1 + O \left(\frac{1}{T^\lambda}\right) \right),\end{equation*}
        \begin{equation*} 
        \label{eqn:asymp2}
        \sum_{\gamma \in P_*(0,T)} \ell(\gamma) e^{\ell_\psi(\gamma)} = \frac{e^{\PP(\psi)T}}{\PP(\psi)} \left(1 + O \left(\frac{1}{T^\lambda}\right) \right).
        \end{equation*}
        The same is true replacing $P_*(0,T)$ with $P(0,T)$. 
    \end{proposition}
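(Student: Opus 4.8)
The plan is to run the contour-integration argument of \cite[Section 3]{pollicottsharp}, fed by the analytic continuation and polynomial bounds for $\eta(s,0) = -L'(s,0)/L(s,0)$ supplied by Proposition \ref{prop:dolg} at $t = 0$, where $s(0) = \PP(\psi)$. The weight $\psi$ changes only constants and bookkeeping, since it merely replaces the exponent $-s\ell(\gamma)$ by $\ell_\psi(\gamma) - s\ell(\gamma)$ throughout.

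First I would record the transform identity, valid for $\Re(s) > \PP(\psi)$,
\[ \eta(s,0) = \sum_{n=1}^\infty \sum_{\gamma \in P_*} \ell(\gamma)\,e^{n\ell_\psi(\gamma) - sn\ell(\gamma)} = \int_0^\infty e^{-sx}\,d\theta(x), \qquad \theta(x) \coloneqq \sum_{\substack{\gamma \in P_*,\, n \geq 1 \\ n\ell(\gamma) \leq x}} \ell(\gamma)\,e^{n\ell_\psi(\gamma)}, \]
where $\theta$ is the nondecreasing step function jumping by $\ell(\gamma)e^{n\ell_\psi(\gamma)}$ at $x = n\ell(\gamma)$. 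By Proposition \ref{prop:dolg}, $\eta(s,0)$ extends to $\Re(s) > \PP(\psi) - C\min\{1, |\Im s|^{-\rho}\}$, analytic there except for a simple pole of residue $1$ at $s = \PP(\psi)$, and satisfies $|\eta(s,0)| = O(\max\{|\Im s|^{\beta}, 1\})$ on that region. Fixing $N > \beta$ so that $\eta(s,0)\,e^{sx}s^{-N-1}$ is absolutely integrable on vertical lines, Perron's formula identifies $\frac{1}{2\pi i}\int_{c-i\infty}^{c+i\infty}\eta(s,0)\,e^{sx}s^{-N-1}\,ds$, for any $c > \PP(\psi)$, with the $N$-fold antiderivative of $\theta$.

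Next I would shift the contour from $\Re(s) = c$ onto a contour $\mathcal{C}$ following the curve $\Re(s) = \PP(\psi) - \tfrac{C}{2}\min\{1,|\Im s|^{-\rho}\}$ (routing around $s = 0$ if necessary, which is possible since $\PP(\psi) > 0$); the connecting horizontal segments at heights $\pm R$ vanish as $R \to \infty$ because $N > \beta$. Crossing the pole at $s = \PP(\psi)$ contributes the residue $e^{\PP(\psi)x}/\PP(\psi)^{N+1}$. On the curved part of $\mathcal{C}$ one has $|e^{sx}| = e^{\PP(\psi)x}\,e^{-\frac{C}{2}x|\Im s|^{-\rho}}$, so this part contributes $e^{\PP(\psi)x}$ times $\int_1^\infty u^{\beta - N - 1}e^{-\frac{C}{2}xu^{-\rho}}\,du = O(x^{-(N-\beta)/\rho})$ (splitting at $u \asymp x^{1/\rho}$), while the portion of $\mathcal{C}$ with $|\Im s| \leq 1$, lying on $\Re(s) = \PP(\psi) - \tfrac{C}{2}$, contributes $O(e^{(\PP(\psi) - C/2)x})$. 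Hence the $N$-fold antiderivative of $\theta$ equals $\frac{e^{\PP(\psi)x}}{\PP(\psi)^{N+1}} + e^{\PP(\psi)x}O(x^{-\delta})$ with $\delta \coloneqq (N-\beta)/\rho > 0$. Since $\theta$ is monotone with $\theta(x) \asymp e^{\PP(\psi)x}$, the standard Tauberian step — differencing this antiderivative $N$ times over an optimized window $h = h(x) \to 0$ and using monotonicity to squeeze $\theta$ — yields $\theta(x) = \frac{e^{\PP(\psi)x}}{\PP(\psi)}\bigl(1 + O(x^{-\lambda})\bigr)$ for some $\lambda \in (0,\delta)$. The terms with $n \geq 2$ contribute only $O(e^{(\PP(\psi) - c')x})$ to $\theta(x)$, by Corollary \ref{cor:strictbound} together with $\PP(n\psi)/n < \PP(\psi)$ from Lemma \ref{lem:pressurebound}, and are absorbed into the error, giving the second asymptotic of the proposition for $P_*(0,T)$. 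The first asymptotic then follows by partial summation: write $\sum_{\gamma \in P_*(0,T)} e^{\ell_\psi(\gamma)} = \int_0^T u^{-1}\,d\bigl(\sum_{\gamma \in P_*(0,u)} \ell(\gamma)e^{\ell_\psi(\gamma)}\bigr)$ and integrate by parts against the asymptotic just proved, using $\int^T u^{-1}e^{\PP(\psi)u}\,du \sim e^{\PP(\psi)T}/(\PP(\psi)T)$. Finally, replacing $P_*(0,T)$ by $P(0,T)$ alters each sum only by $\sum_{\gamma \in P(0,T)\setminus P_*(0,T)} (1 \text{ or } \ell(\gamma))\,e^{\ell_\psi(\gamma)}$, which is $O(e^{(\PP(\psi)-\epsilon)T})$ by Lemma \ref{lem:prime} (valid since $\PP(\psi) > 0$ is assumed throughout this section), hence negligible.

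The main obstacle is making the two quantitative estimates survive together: one must choose $N$ against the growth exponent $\beta$ and the narrowing exponent $\rho$ so that the contour shift produces a power saving $x^{-\delta}$, and then check that the Tauberian de-averaging retains a (smaller, but still positive) power saving $x^{-\lambda}$. Everything else is identical to the unweighted computation in \cite[Section 3]{pollicottsharp}, with the topological entropy replaced by $\PP(\psi)$; the hypothesis $\PP(\psi) > 0$ enters precisely to keep the pole at $s = \PP(\psi)$ clear of the pole of $s^{-N-1}$ at the origin when the contour is shifted.
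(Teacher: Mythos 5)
Your proposal is correct and follows essentially the same route as the paper: a Perron/Ingham kernel producing a $k$-fold (your $N$-fold) average of the weighted orbit-counting function, a contour shift into the Dolgopyat region of Proposition \ref{prop:dolg} picking up the residue at $s=\PP(\psi)$, an iterated Tauberian de-averaging using monotonicity, partial summation for the unweighted-by-$\ell(\gamma)$ sum, and Lemma \ref{lem:prime} to handle non-prime orbits. The only differences are cosmetic — you work in the additive length variable with kernel $e^{sx}s^{-N-1}$ and an infinite contour along the curve $\Re(s)=\PP(\psi)-\tfrac{C}{2}\min\{1,|\Im s|^{-\rho}\}$, whereas the paper uses the multiplicative variable $T$ with the kernel $T^{s+k}/(s(s+1)\cdots(s+k))$ and a truncated box contour at height $R=(\log T)^{\epsilon}$ — and these yield the same power saving after the change of variables $T=e^{x}$.
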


    \begin{proof}
    By Lemma \ref{lem:prime}, it suffices to show the asymptotics for $P_*(0,T)$.
    Define 
        \begin{align*} S_{0}(T) \coloneqq \sum_{n=1}^\infty \sum_{e^{n\ell(\gamma)} \leq T} \ell(\gamma) e^{n\ell_\psi(\gamma)},\ \ \hat{S}_{0}(T) \coloneqq \sum_{e^{\ell(\gamma)} \leq T} \ell(\gamma) e^{\ell_\psi(\gamma)},\end{align*}
        where both sums are taken over prime orbits $\gamma$. For $k \geq 1$, define
        \[ S_{k}(T) \coloneqq \int_0^T S_{k-1}(\tau)\, d\tau  = \frac{1}{k!}\sum_{n=1}^\infty \sum_{e^{n\ell(\gamma)} \leq T} \ell(\gamma) e^{n\ell_\psi(\gamma)}\left(T - e^{n \ell(\gamma)} \right)^k,\]
        where again this sum is over all prime orbits $\gamma$. We define $\hat{S}_{k}$ similarly.  
        
        Given a closed orbit $\gamma' \in P$, one can write $\gamma' = \gamma^n$ for a unique choice of $\gamma \in P_*$. Define the von Mangoldt function of this closed orbit by
        $\Lambda(\gamma') \coloneqq \ell(\gamma)$.
        Then
        \[ S_k(T) = \frac{1}{k!} \sum_{e^{\ell(\gamma')} \leq T} \Lambda(\gamma') e^{\ell_\psi(\gamma')} \left(T - e^{\ell(\gamma')} \right)^k, \]
        where the sum is over all closed orbits $\gamma'$.
        
        The following identity holds for all $d > 0$ and $k \geq 1$ \cite[p.\ 31]{ingham}:
        \begin{equation} \label{eqn:analyticnum}
        \frac{1}{2\pi i }\int_{d-i\infty}^{d+i \infty} \frac{x^s}{s(s+1) \cdots (s+k)}\,ds = \begin{dcases} 0 & \text{if } 0 < x < 1, \\ \frac{1}{k!} \left(1 - \frac{1}{x} \right)^k & \text{if }x \geq 1. \end{dcases}
        \end{equation}
        Applying this term by term to the series given by $\eta(s,0)$, we have
        \begin{equation} \label{eqn:trunacte}
            \begin{split} S_k(T) = \frac{1}{2\pi i }\int_{d-i\infty}^{d+i \infty} \frac{\eta(s,0)T^{s+k}}{s(s+1) \cdots (s+k)}\,ds.\end{split} 
        \end{equation}
        Fix $0 < \epsilon < 1/\rho$, where $\rho$ is as in Proposition \ref{prop:dolg}. Let 
        \[d \coloneqq \PP(\psi) + \frac{1}{\log(T)}, \ \ R \coloneqq (\log(T))^\epsilon, \ \ c \coloneqq \PP(\psi) - \frac{C}{2 R^\rho}.\]
        Consider the curve $\Gamma$ which is the union of the line segments $[d+iR,d+i\infty]$, $[c + iR, d + iR]$, $[c - iR, c + iR]$, $[d-iR, c - iR]$, and $[d-iR,d-i\infty]$. 
        As long as $T$ is large, Proposition \ref{prop:dolg} and the residue theorem imply
        \[ S_k(T) = \frac{T^{\PP(\psi)+k}}{\PP(\psi)(\PP(\psi)+1)\cdots(\PP(\psi)+k)} + \frac{1}{2\pi i} \int_\Gamma \frac{\eta(s,0)T^{s+k}}{s(s+1) \cdots (s+k)}\,ds.\]
        Similarly to the contour arguments in \cite{cantrellsharp, pollicottsharp}, Proposition \ref{prop:dolg} can be used to bound the integral on each segment of $\Gamma$, and one finds that for $k$ sufficiently large there exists $\alpha > 0$ such that
        \[ S_k(T) = \frac{T^{\PP(\psi)+k}}{\PP(\psi)(\PP(\psi)+1)\cdots(\PP(\psi)+k)} + O \left( \frac{T^{\PP(\psi)+k}}{(\log(T))^{\alpha}}\right).\]
        Using Lemma \ref{lem:prime}, we have
        \begin{align*}
        S_k(T) - \hat{S}_k(T) 
        &= \frac{1}{k!}\sum_{P(0,\log(T))\setminus P_*(0,\log(T))}\Lambda(\gamma)e^{\ell_\psi(\gamma)}(T-e^{\ell(\gamma)})^k \\
        &\leq \frac{1}{k!}\log(T)T^k\sum_{P(0,\log(T))\setminus P_*(0,\log(T))}e^{\ell_\psi(\gamma)} \\
        &= O(\log(T) T^{\delta+k}) \text{ for some } 0 < \delta < \PP(\psi).
        \end{align*}
        Thus for $k \geq 1$ we have 
        \[ \hat{S}_k(T) = \frac{T^{\PP(\psi)+k}}{\PP(\psi)(\PP(\psi)+1)\cdots(\PP(\psi)+k)} + O \left( \frac{T^{\PP(\psi)+k}}{(\log(T))^{\alpha}}\right).\]
        To prove the proposition, we need a similar bound for $k=0$. Since $\hat{S}_{k-1}$ is increasing and $\hat{S}_k$ is the integral of $\hat{S}_{k-1}$, for $\epsilon > 0$ we have
        \begin{equation} \label{eqn:ineq}
            \frac{1}{T\epsilon}(\hat{S}_k(T)-\hat{S}_k(T-T\epsilon)) \leq \hat{S}_{k-1}(T) \leq \frac{1}{T\epsilon}(\hat{S}_k(T+T\epsilon)-\hat{S}_k(T)).
        \end{equation}        
        Iterating this inequality $k$ times using $\epsilon = (\log T)^{-\alpha/2^j}$ at the $j$\textsuperscript{th} step, one can show 
        \[ \sum_{e^{ \ell(\gamma)} \leq T}  \ell(\gamma) e^{\ell_\psi(\gamma)} =\hat{S}_0(T) = \frac{T^{\PP(\psi)}}{\PP(\psi)} + O \left( \frac{T^{\PP(\psi)}}{(\log(T))^\lambda}\right),
        \]
        where $\lambda = \alpha/2^k$.
        By replacing $T$ with $e^{T}$, 
        this proves the second statement of the proposition:
        \[ \sum_{P_*(0,T)} \ell(\gamma) e^{\ell_\psi(\gamma)} = \frac{e^{\PP(\psi)T}}{\PP(\psi)} \left( 1 + O \left( \frac{1}{T^\lambda}\right)\right).\]
        
        As for the first statement, we use a Stieltjes integral to obtain
        \begin{align*} \sum_{e^{\ell(\gamma)} \leq T} e^{\ell_\psi(\gamma)} & = \int_2^T \frac{1}{\log(\tau)} \,d \hat{S}_0(\tau) + O(1) \\
        & = \frac{\hat{S}_0(T)}{\log(T)} + \int_2^T \frac{\hat{S}_0(\tau)}{\tau (\log(\tau))^2}\,d\tau + O(1).\end{align*}
        A standard argument using the estimate on $\hat{S}_0(T)$ then proves
        \[ \sum_{e^{ \ell(\gamma)} \leq T} e^{\ell_\psi(\gamma)} = \frac{T^{\PP(\psi)}}{\PP(\psi)\log(T)} + O \left(\frac{T^{\PP(\psi)}}{(\log(T))^{1+\lambda}} \right), \]
        or equivalently,
        \[ \sum_{\gamma \in P_*(0,T)} e^{\ell_\psi(\gamma)} = \frac{e^{\PP(\psi)T}}{\PP(\psi)T} \left( 1 + O \left(\frac{1}{T^\lambda}\right) \right).\qedhere \]
    \end{proof}

    To obtain the desired central limit theorem, we modify the functions $S_k$ and $\hat{S}_k$ to keep track of the observable $\phi$. For $t \in (-\delta, \delta)$, define
    \begin{align*} S_{0,t}(T) \coloneqq \sum_{n=1}^\infty \sum_{e^{n\ell(\gamma)} \leq T} \ell(\gamma) e^{n\ell_\psi(\gamma) + itn \ell_\phi(\gamma)}, & & \hat{S}_{0,t}(T) = \sum_{e^{\ell(\gamma)} \leq T} \ell(\gamma) e^{\ell_\psi(\gamma) + it\ell_\phi(\gamma)},\end{align*}
    where the sums are taken over prime orbits $\gamma$. For $k \geq 1$, define
    \[ S_{k,t}(T) \coloneqq \int_0^T S_{k-1,t}(\tau)\, d\tau  = \frac{1}{k!}\sum_{n=1}^\infty \sum_{e^{n\ell(\gamma)} \leq T} \ell(\gamma) e^{n\ell_\psi(\gamma)+itn\ell_\phi(\gamma)}\left(T - e^{n \ell(\gamma)} \right)^k,\]
    and define $\hat{S}_{k,t}$ similarly.
    The goal is to repeat the arguments from Proposition \ref{prop:asymp} with $S_{k,t}$ in place of $S_k$. However, two issues arise when
    considering nonzero $t$: first, as ${t \in (-\delta, \delta)}$ varies, the singularity $s(t)$ leaves the region inside the contour used in Proposition \ref{prop:asymp}.
    Second, if $t \neq 0$, then it no longer makes sense to say that $\hat{S}_{k,t}(T)$ is increasing. Both issues were handled in \cite{cantrellsharp} when $\psi \equiv 0$; we will now provide these arguments for $\psi \not\equiv 0$.

    Applying \eqref{eqn:analyticnum} term by term to $\eta(s,t)$, we have
    \[ S_{k,t}(T) = \frac{1}{k!}\sum_{n=1}^\infty \sum_{e^{n\ell(\gamma)} \leq T} \ell(\gamma) e^{n\ell_\psi(\gamma)+itn\ell_\phi(\gamma)} (T - e^{n\ell(\gamma)})^k = \frac{1}{2\pi i} \int_{d-i\infty}^{d+i \infty} \frac{\eta(s,t) T^{s+k}}{s(s+1) \cdots (s+k)}\,ds.\]
    Define $d, R, c, \Gamma$ as in the proof of Proposition \ref{prop:asymp}, except in the definition of $\Gamma$ one replaces $[c-iR,c+iR]$ with the segments $[c-iR,c-ir_2]$, $[c-ir_2,r_1-ir_2]$, $[r_1-ir_2,r_1+ir_2]$, $[r_1+ir_2,c+ir_2]$, $[c+ir_2,c+iR]$, where $0<r_1<\PP(\psi)$, $r_2> 0$ are chosen so that, for small enough $t$, the contour now encompasses the singularities $s(t)$ while remaining in the region described by Proposition \ref{prop:dolg}. Thus we still have
    \[S_{k,t}(T) = \frac{T^{s(t)+k}}{s(t)(s(t)+1) \cdots (s(t)+k)} + \frac{1}{2\pi i} \int_\Gamma \frac{\eta(s,t) T^{s+k}}{s(s+1) \cdots (s+k)}\,ds,\]
    and modifying the estimates in \cite[Lemma 4.1]{cantrellsharp} 
    we obtain $k \geq 1$ and $\alpha > 0$ such that 
    \[  S_{k,t}(T) = \frac{T^{s(t)+k}}{s(t)(s(t)+1) \cdots (s(t)+k)} + O \left( \frac{T^{\PP(\psi)+k}}{(\log(T))^\alpha}\right),\]
    where the error term is independent of $t \in (-\delta, \delta)$.
    Using Lemma \ref{lem:prime}, we equivalently have
    \begin{equation} \label{eqn:obsn}
    \hat{S}_{k,t}(T) = \frac{T^{s(t)+k}}{s(t)(s(t)+1) \cdots (s(t)+k)} + O \left( \frac{T^{\PP(\psi)+k}}{(\log(T))^\alpha}\right).
    \end{equation}  
    
    We now follow the calculations in \cite[Section 5]{cantrellsharp} to get an estimate on $\hat{S}_{0,t}(T)$. Let ${\epsilon \coloneqq (\log(T))^{-\alpha/2}}$. Notice that decreasing $\alpha$ preserves \eqref{eqn:obsn}, so we may assume that $\alpha < 2 \lambda$, where $\lambda$ is as in Proposition \ref{prop:asymp}. Using \eqref{eqn:obsn}, we have on the one hand
    \[ \hat{S}_{k,t}(T+T\epsilon) - \hat{S}_{k,t}(T) = \frac{T^{s(t)+k} \epsilon}{s(t) (s(t)+1) \cdots (s(t)+k-1)} + O \left( \frac{T^{\PP(\psi)+k}}{(\log(T))^\alpha} \right), \]
    while on the other,
    \begin{align*}
        \hat{S}_{k,t}(T+T\epsilon) - \hat{S}_{k,t}(T) & = \frac{1}{k!}\sum_{T< e^{\ell(\gamma)} \leq T+T\epsilon} \ell(\gamma) e^{ \ell_\psi(\gamma)+it \ell_\phi(\gamma) } \left( T - e^{\ell(\gamma)}\right)^k \\
        & +  \frac{T\epsilon}{(k-1)!} \sum_{e^{\ell(\gamma)} \leq T+T\epsilon} \ell(\gamma) e^{\ell_\psi(\gamma)+it \ell_\phi(\gamma)} \left( T - e^{\ell(\gamma)}\right)^{k-1} \\
        & + \frac{1}{k!}\sum_{j=2}^k (T\epsilon)^j \binom{k}{j} \sum_{e^{\ell(\gamma)} \leq T+T\epsilon}\ell(\gamma) e^{\ell_\psi(\gamma)+it \ell_\phi(\gamma)} \left( T - e^{\ell(\gamma)}\right)^{k-j}.
    \end{align*}
    Using Proposition \ref{prop:asymp}, we observe that
    \begin{align*} \sum_{T < e^{\ell(\gamma)} \leq T+T\epsilon} \ell(\gamma) e^{\ell_\psi(\gamma)} &= \epsilon T^{\PP(\psi)}\frac{(1+\epsilon)^{\PP(\psi)}-1}{\epsilon\PP(\psi)}+O(T^{\PP(\psi)}(\log T)^{-\lambda}) \\
    &= \epsilon T^{\PP(\psi)}\left(1+o(1)+O((\log T)^{\alpha/2-\lambda})\right).
    \end{align*}
    Since $\alpha/2 < \lambda$, we have
    \[ \sum_{T < e^{\ell(\gamma)} \leq T + T\epsilon} \ell(\gamma) e^{it \ell_\phi(\gamma) + \ell_\psi(\gamma)} \left( T - e^{\ell(\gamma)}\right)^k = O(T^{\PP(\psi)+k} \epsilon^{k+1}).\]
    Note this also gives us 
    \[ \frac{T \epsilon}{(k-1)!} \sum_{e^{\ell(\gamma)} \leq T + T\epsilon} \ell(\gamma) e^{it \ell_\phi(\gamma) + \ell_\psi(\gamma)} \left( T - e^{\ell(\gamma)}\right)^{k-1} = T \epsilon \hat{S}_{k-1,t}(T) + O(T^{\PP(\psi)+k} \epsilon^{k+1}). \]
    Finally, it is clear that
    \[ \frac{1}{k!}\sum_{j=2}^k (T \epsilon)^j \binom{k}{j} \sum_{e^{\ell(\gamma)} \leq T + T\epsilon}\ell(\gamma) e^{it \ell_\phi(\gamma) + \ell_\psi(\gamma)} \left( T - e^{\ell(\gamma)}\right)^{k-j} = \sum_{j=2}^k O(T^{\PP(\psi)+k} \epsilon^j).\]
    Putting all of the above together, for $k \geq 1$ we have
    \[ T \epsilon \hat{S}_{k-1,t}(T) = \frac{T^{s(t)+k} \epsilon}{s(t) (s(t)+1) \cdots (s(t)+k-1)} + O \left( \frac{T^{\PP(\psi)+k}}{(\log(T))^\alpha} \right) + O(T^{\PP(\psi)+k}\epsilon^2 ). \]
    Dividing both sides by $T\epsilon$ yields
    \[ \hat{S}_{k-1,t}(T) = \frac{T^{s(t)+k-1} }{s(t) (s(t)+1) \cdots (s(t)+k-1)} + O \left( \frac{T^{\PP(\psi)+k-1}}{(\log(T))^{\alpha/2}}\right),\]
    which is the desired inductive step. Iterating this $k$ times, we obtain
    \[
    \hat{S}_{0,t}(T) = \frac{T^{s(t)}}{s(t)} + O\left(\frac{T^{\PP(\psi)}}{(\log(T))^{\nu}}\right),
    \]
    or in other words,
    \begin{equation}\label{eqn:yay}
    \sum_{\gamma \in P_*(0,T)} \ell(\gamma) e^{\ell_\psi(\gamma)+it \ell_\phi(\gamma)} 
    = \frac{e^{Ts(t)}}{s(t)} + O \left( \frac{e^{T\PP(\psi)}}{T^\nu} \right),\end{equation}
    where $\nu \coloneqq \alpha/2^k$ and the error term is independent of $t \in (-\delta, \delta)$.
    
    By \eqref{eqn:pressuretaylor}, recall that for any fixed $t \in \R$,
    \[ Ts(t/\sqrt{T}) = \PP(\psi)T - \frac{\sigma_{\phi,\psi}^2 t^2}{2} + o(1).\]
    Replacing $t$ with $t/\sqrt{T}$ in \eqref{eqn:yay} and using the fact that the error term is independent of $t$, 
    \begin{equation}\label{eqn:yay1}
    g(T) \coloneqq \sum_{\gamma \in P_*(0,T)} \ell(\gamma)e^{ \ell_\psi(\gamma)+ it\ell_\phi(\gamma)/\sqrt{T}} \sim
    \frac{e^{\PP(\psi)T-\sigma^2_{\phi,\psi}t^2/2}}{\PP(\psi)}.
    \end{equation}
    We would like an asymptotic similar to this, except with the $\ell(\gamma)$ terms removed.
    Similarly to the proof of Proposition \ref{prop:asymp}, we start by writing
    \begin{align*}
    \sum_{\gamma \in P_*(0,T)} e^{\ell_\psi(\gamma)+it \ell_\phi(\gamma)/\sqrt{\ell(\gamma)} } & = \int_1^T \frac{dg(\tau)}{\tau} + O(1) \\
    &= \frac{g(T)}{T} + O \left( \frac{e^{\PP(\psi)T}}{T^2} \right)+O(1).
    \end{align*}
    Therefore by \eqref{eqn:yay1},
    \begin{equation} \label{eqn:yay?}
    \sum_{\gamma \in P_*(0,T)} e^{\ell_\psi(\gamma)+it \ell_\phi(\gamma)/\sqrt{\ell(\gamma)}} \sim \frac{e^{\PP(\psi)T - \sigma_{\phi,\psi}^2t^2/2}}{\PP(\psi)T}.
    \end{equation}
    Next, we need to replace $P_*(0,T)$ with $P_*(T,\Delta)$. Using $P_*(T,\Delta) = P_*(0,T+\Delta)-P_*(0,T)$ along with \eqref{eqn:yay?}, it is easy to deduce
    \[
    \sum_{\gamma \in P_*(T,\Delta)} e^{\ell_\psi(\gamma)+it \ell_\phi(\gamma)/\sqrt{\ell(\gamma)}}
    \sim (e^{\PP(\psi)\Delta}-1)\frac{e^{\PP(\psi)T-\sigma^2_{\phi,\psi}t^2/2}}{\PP(\psi)T}.
    \]
    Finally, we need to replace $\sqrt{\ell(\gamma)}$ with $\sqrt{T}$. Note that for a fixed $t$ we have 
    \[ \sum_{\gamma \in P_*(T,\Delta)} \left|e^{\ell_\psi(\gamma)+it\ell_\phi(\gamma)/\sqrt{\ell(\gamma)}} - e^{\ell_\psi(\gamma)+it\ell_\phi(\gamma)/\sqrt{T}} \right| = O \left( \frac{1}{\sqrt{T}} \sum_{\gamma \in P_*(T,\Delta)} e^{\ell_\psi(\gamma)} \right) = O\left(\frac{e^{\PP(\psi)T}}{T^{3/2}}\right),\] 
    thus
    \begin{equation}\label{eqn:yay3}
    \sum_{\gamma \in P_*(T,\Delta)} e^{\ell_\psi(\gamma)+it \ell_\phi(\gamma)/\sqrt{T}}
    \sim (e^{\PP(\psi)\Delta}-1)\frac{e^{\PP(\psi)T-\sigma^2_{\phi,\psi}t^2/2}}{\PP(\psi)T}.
    \end{equation}
    Comparing \eqref{eqn:yay3} to the same asymptotic for $t=0$, we deduce
    \[
    \lim_{T \rightarrow \infty} \frac{\sum_{\gamma \in P_*(T,\Delta)} e^{\ell_\psi(\gamma) + it\ell_\phi(\gamma)/\sqrt{T}}}{ \sum_{\gamma \in P_*(T,\Delta)} e^{\ell_\psi(\gamma)}} = e^{-\sigma_{\phi,\psi}^2t^2/2}.
    \]
    Similarly, using \eqref{eqn:yay1} and \eqref{eqn:yay?},
    \[
    \lim_{T \rightarrow \infty} \frac{\sum_{\gamma \in P_*(0,T)} \ell(\gamma)e^{\ell_\psi(\gamma) + it\ell_\phi(\gamma)/\sqrt{T} }}{ \sum_{\gamma \in P_*(0,T)} \ell(\gamma)e^{\ell_\psi(\gamma)}} = \lim_{T \rightarrow \infty} \frac{ \sum_{\gamma \in P_*(0,T)} e^{\ell_\psi(\gamma) + it\ell_\phi(\gamma)/\sqrt{\ell(\gamma)} }}{ \sum_{\gamma \in P_*(0,T)} e^{\ell_\psi(\gamma)}} = e^{-\sigma_{\phi,\psi}^2t^2/2},
    \]
    and one can use Lemma \ref{lem:prime} to replace $P_*$ with $P$ in any of these limits.
    By L{\'e}vy's continuity theorem, each of these is its own central limit theorem, and this includes Theorem \ref{thm:CCLT}.

\end{document}